\theoremstyle{plain}
\newtheorem{theorem}{Theorem}[section]
\newtheorem{proposition}{Proposition}[section]
\newtheorem{definition}{Definition}[section]
\theoremstyle{remark}
\newtheorem{remark}{Remark}[section]
\numberwithin{equation}{section} \numberwithin{definition}{section}
\def\be{\begin{equation}}
\def\ee{\end{equation}}
\newcommand{\bfa}[1]{\mbox{\boldmath $ #1 $}}
\begin{document}

\title{Stability analysis and simulations of coupled bulk-surface reaction-diffusion systems}
\author{Anotida Madzvamuse,\thanks{Corresponding author: a.madzvamuse@sussex.ac.uk} \footnote{Address for all authors: University of Sussex, School of Mathematical and Physical Sciences, Department of Mathematics, University of Sussex, Pev III, Brighton, BN19QH, UK} \quad
Andy H.W. Chung  \quad and \quad
Chandrasekhar Venkataraman}

\date{}
\maketitle

\noindent{\bf Abstract.}

\bigskip
In this article we formulate new models for coupled systems of bulk-surface reaction-diffusion equations on stationary volumes. The bulk reaction-diffusion equations are coupled to the surface reaction-diffusion equations through linear Robin-type boundary conditions. We then state and prove the necessary conditions for diffusion-driven instability for the coupled system. Due to the nature of the coupling between bulk and surface dynamics, we are able to decouple the stability analysis of the bulk and surface dynamics. Under a suitable choice of model parameter values, the bulk reaction-diffusion system can induce patterning on the surface independent of whether the surface reaction-diffusion system produces or not, patterning. On the other hand, the surface reaction-diffusion system can not generate patterns everywhere in the bulk in the absence of patterning from the bulk reaction-diffusion system. For this case, patterns can only be induced in regions close to the surface membrane. Various numerical experiments are presented to support our theoretical findings. Our most revealing numerical result is that, Robin-type boundary conditions seem to introduce a boundary layer coupling the bulk and surface dynamics.  
\bigskip

\noindent{\bf Key words.}

\bigskip
\noindent Bulk-surface reaction-diffusion equations, bulk-surface finite elements, Turing {\it diffusively-driven} instability, linear stability, pattern formation, Robin-type boundary conditions
\bigskip

\noindent {\bf Mathematics subject classification}

\bigskip
35K55, 35K57, 37B25, 37B55, 37C60, 37C75,

\section{Introduction}
\label{sec:introd}
In many fluid dynamics applications and biological processes, coupled bulk-surface partial differential equations naturally arise in ($2D + 3D$).  In most of these applications and processes, morphological instabilities occur through symmetry breaking resulting in the formation of heterogeneous distributions of chemical substances \cite{levine2005}. In developmental biology, it is essential the emergence and maintenance of polarised states in the form of heterogeneous distributions of chemical substances such as proteins and lipids. Examples of such processes include (but are not limited to) the formation of buds in yeast cells and cell polarisation in biological cells due to responses to external signals through the outer cell membrane  \cite{ratz2012,ratz2013}.  In the context of reaction-diffusion processes, such symmetry breaking arises when a uniform steady state, stable in the absence of diffusion, is driven unstable when diffusion is present thereby giving rise to the formation of spatially inhomogeneous solutions in a process now well-known as the Turing diffusion-driven instability \cite{turing1952}. Classical Turing theory requires that one of the chemical species, typically the {\it inhibitor}, diffuses much faster than the other, the {\it activator} resulting in what is known as the {\it long-range inhibition} and {\it short-range activation} \cite{gierer1972,murray2003}.  

Recently, there has been a surge in studies on models that coupled bulk dynamics to surface dynamics. For example, R\"{a}tz and R\"{o}ger \cite{ratz2013} study symmetry breaking in a bulk-surface reaction-diffusion model for signalling networks. In this work, a single diffusion partial differential equation (the heat equation) is formulated inside the bulk of a cell, while on the cell-surface, a system of two membrane reaction-diffusion equations  
is formulated. The bulk and cell-surface membrane are coupled through Robin-type boundary conditions and a flux term for the membrane system \cite{ratz2013}. Elliott and Ranner \cite{elliott2012} study a finite element approach to a sample elliptic problem: a single elliptic partial differential equation is posed in the bulk and another is posed on the surface. These are then coupled through Robin-type boundary conditions. Novak et al. \cite{novak2007} present an algorithm for solving a diffusion equation on a curved surface coupled to a diffusion model in the volume.  Checkkin et al. \cite{chechkin2012} study bulk-mediated diffusion on planar surfaces. Again, diffusion models are posed in the bulk and on the surface coupling them through boundary conditions. In the area of tissue engineering and regenerative medicine, electrospun membrane are useful in applications such as filtration systems and sensors for chemical detection. Understanding of the fibres' surface, bulk and architectural properties is crucial to the successful development of integrative technology. Nisbet et al. \cite{nisbet2009} presents a detailed review on surface and bulk characterisation of electrospun membranes of porous and fibrous polymer materials.  To explain the long-range proton translocation along biological mombranes, Medvedev and Stuchebrukhov \cite{medvedev2013} propose a model that takes into account the coupled bulk-diffusion that accompanies the migration of protons on the surface. More recently, Rozada {\it et al.,} \cite{rozada2014} presented singular perturbation theory for the stability of localised spot patterns for the Brusselator model on the sphere.

In most of the work above, either elliptic or diffusion models in the bulk have been coupled to surface-elliptic or surface-diffusion or surface-reaction-diffusion models posed on the surface through Robin-type boundary conditions \cite{chechkin2012,elliott2013,medvedev2013,nisbet2009,novak2007,ratz2012,ratz2013}. Here, our focus is to couple systems of reaction-diffusion equations posed both in the bulk and on the surface, setting a mathematical and computational framework to study more complex interactions such as those observed in cell biology, tissue engineering and regenerative medicine, developmental biology  and biopharmaceuticals \cite{chechkin2012,elliott2013,medvedev2013,nisbet2009,novak2007,ratz2012,ratz2013,venkpre}.  We employ the bulk-surface finite element method as introduced by Elliott and Ranner in \cite{elliott2012} to numerically solve the coupled system of  bulk-surface reaction-diffusion equations. Details of the surface-finite element can be found in \cite{dziuk2007}. The bulk  and surface reaction-diffusion systems are coupled through Robin-type boundary conditions. The coupled bulk-surface finite element algorithm is implemented in {\bf deall II} \cite{bangerth2013}.

The key contributions of our work to the theory of pattern formation are:
\begin{itemize}
\item We derive and prove Turing diffusion-driven instability conditions for a coupled system of bulk-surface reaction-diffusion equations. 
\item Using a bulk-surface finite element method,  we approximate the solution to the model system within the bulk and on the boundary surface of a sphere of radius one. 
\item Our results show that if the surface-reaction-diffusion system has the {\it long-range inhibition, short-range activation} form and the bulk-reaction-diffusion system has equal diffusion coefficients, then the surface-reaction-diffusion system can induce patterns in the bulk close to the surface and no patterns form in the interior, far away from the surface. 
\item On the other hand, if the bulk-reaction-diffusion system has the {\it long-range inhibition, short-range activation} form and the surface-reaction-diffusion system has equal diffusion coefficients, then the bulk-reaction-diffusion system can induce pattern formation on the surface. 
\item Furthermore, we prove that  if the bulk and surface reaction-diffusion systems have equal diffusion coefficients, no patterns form. 
\item These theoretical predictions are supported by numerical simulations. 
\end{itemize}

Hence this article is outlined as follows. In Section \ref{sec:prelim} we present the coupled bulk-surface reaction-diffusion system on stationary volumes with appropriate boundary conditions coupling the bulk and surface partial differential equations. The main results of this article are presented in Section \ref{sec:linearstability} where we derive Turing diffusion-driven instability conditions for the coupled system of bulk-surface reaction-diffusion equations. To validate our theoretical findings, we present  bulk-surface finite element numerical solutions in Section \ref{sec:numerics}. In Section \ref{sec:conclusion}, we conclude and discuss the implications of our findings.

\section{Coupled bulk-surface reaction-diffusion systems on stationary volumes}
\label{sec:prelim}
In this section we present a coupled system of bulk-surface reaction-diffusion equations (BSRDEs) posed in a three-dimensional volume as well as on the boundary surface enclosing the volume. We impose Robin-type boundary conditions on the bulk reaction-diffusion system while no boundary conditions are imposed on the surface reaction-diffusion system since the surface is closed.

\subsection{A coupled system of bulk-surface reaction-diffusion equations (BSRDEs) }\label{b_rdes}

Let $\Omega$ be a stationary volume (whose interior is denoted the bulk) enclosed by a compact hypersurface $\Gamma:=\partial \Omega$ which is $C^2$. Also, let $I\!=\![0,T]\; (T>0)$  be some time interval. Moreover, let $\bfa{\nu}$ denote the unit outer normal to $\Gamma$, and let $U$ be any open subset of $\mathbb{R}^{N+1}$ containing $\Gamma$, then  for any function $u$ which is differentiable in $U$, we define the tangential gradient on $\Gamma$ by,
$\nabla_{\Gamma} u = \nabla u - \left(\nabla u \cdot \bfa{\nu}\right) \bfa{\nu},$
where $\cdot$ denotes the regular dot product and $\nabla$ denotes the regular gradient in $\mathbb{R}^{N+1}$.  The tangential gradient is the projection of the regular gradient onto the tangent plane, thus $\nabla_{\Gamma} u \cdot \bfa{\nu} =0$. The Laplace-Beltrami operator on the surface $\Gamma$ is then defined to be the tangential divergence of the tangential gradient
$\Delta_{\Gamma} u = \nabla_{\Gamma} \cdot \nabla_{\Gamma} u$.
For a  vector function $\bfa{u} = (u_1,u_2,\ldots,u_{N+1})\in \mathbb{R}^{N+1}$ the tangential divergence is defined by
\[ \nabla_{\Gamma} \cdot \bfa{u} = \nabla \cdot \bfa{u} - \displaystyle \sum_{i=1}^{N+1}\Big(\nabla u_i \cdot \bfa{\nu} \Big)\nu_i. \]

To proceed, we denote by  $u :\Omega\times I\to\mathbb{R}$ and $v:\Omega\times I\to\mathbb{R}$ two chemical concentrations (species) that react and diffuse in $\Omega$ and  and $r:\Gamma\times I\to\mathbb{R}$ and $s:\Gamma\times I\to\mathbb{R}$  be two chemical species residing only on the surface $\Gamma$ which react and diffuse on the surface. In the absence of cross-diffusion and assuming that coupling is only through the reaction kinetics, we propose to study the following non-dimensionalised coupled system of BSRDEs
\begin{equation}\label{bs_rdes_model0}
\begin{cases}
\begin{cases}
u_t  = \nabla^2 u + \gamma_{ \Omega}  f (u,v,),  \\
v_t  = d_{ \Omega} \nabla^2 v + \gamma_{ \Omega} g (u,v),
\end{cases} \quad \text{in} \; \Omega\times(0,T], \\
\\
\begin{cases}
r_t  =  \nabla_{\Gamma}^2 r + \gamma_{ \Gamma}  \Big( f (r,s) - h_1 (u,v,r,s) \Big),  \\
s_t  = d_{ \Gamma} \nabla_{\Gamma}^2 s + \gamma_{ \Gamma}\Big( g(r,s) - h_2 (u,v,r,s)\Big),
\end{cases}\quad  \quad \text{on} \; \Gamma\times(0,T], \\\end{cases}
\end{equation}
with coupling boundary conditions
\begin{equation}\label{boundary_conditions}
\begin{cases}
 \frac{\partial u }{\partial {\bfa  \nu}}  & =  \gamma_{ \Gamma} h_1 (u,v,r,s), \\
d_{ \Omega} \frac{\partial v }{\partial {\bfa  \nu}}  & = \gamma_{ \Gamma} h_2 (u,v,r,s),
\end{cases} \quad \text{on} \; \Gamma \times(0,T]. \\
\end{equation}
In the above, $\nabla^2 = \frac{\partial ^2 }{\partial x^2} + \frac{\partial ^2 }{\partial y^2} + \frac{\partial ^2 }{\partial z^2}$ represents the Laplacian operator. $d_{ \Omega}$ and $d_{ \Gamma}$ are a positive diffusion coefficients in the bulk and on the surface respectively, representing the ratio between $u$ and $v$, and $r$ and $s$, respectively. $\gamma_{ \Omega}$ and $\gamma_{ \Gamma}$ represent the length scale parameters in the bulk and on the surface respectively. In this formulation, we assume that $f(\cdot,\cdot) $ and $g(\cdot,\cdot) $ are nonlinear reaction kinetics in the bulk and on the surface. $h_1 (u,v,r,s)$ and $h_2 (u,v,r,s)$ are reactions representing the coupling of the internal dynamics in the bulk $\Omega$ to the  surface dynamics on the surface $\Gamma$. As a first attempt, we will consider a more generalised form of  linear coupling of the following nature \cite{macdonald2013}
\begin{align}
h_1 (u,v,r,s) & =  \alpha_1 r - \beta_1 u - \kappa_1 v, \label{hu_eqn1}\\
h_2 (u,v,r,s) & =  \alpha_2 s - \beta_2 u - \kappa_2 v, \label{hv_eqn2}
\end{align}
where $\alpha_1$, $\alpha_2$, $\beta_1$, $\beta_2$, $\kappa_1$ and $\kappa_2$ are constant non-dimensionalised parameters. Initial conditions are given by the positive bounded functions $u_0 ({\bfa  x})$,  $v_0 ({\bfa  x})$, $r_0 ({\bfa  x})$ and $s_0 ({\bfa  x})$.

\subsubsection{Activator-depleted reaction kinetics: An illustrative example}
From now onwards, we restrict our analysis and simulations to the well-known {\it activator-depleted} substrate reaction model \cite{gierer1972,lakkissinum,prigo68,schn79,venkjmb}  also known as
the Brusselator given by
\begin{align}
f(u,v)  = a - u + u^2\,v,\quad \text{and} \quad 
g(u,v)  =b - u^2\,v,\label{schnakv}
\end{align}
where  $a$ and $b$ are positive parameters. For analytical simplicity, we postulate the model system \eqref{bs_rdes_model0} in a more compact form given by
\begin{equation}\label{bs_rdes_model}
\begin{cases}
\begin{cases}
u_t  = \nabla^2 u + f_1 (u,v,r,s),  \\
v_t  = d_{ \Omega} \nabla^2 v + f_2 (u,v,r,s),
\end{cases} \quad {\bfa  x} \; \text{on} \; \Omega,\,\, t>0, \\
\\
\begin{cases}
r_t  =  \nabla_{\Gamma}^2 r +  f_3 (u,v,r,s),  \\
s_t  = d_{ \Gamma} \nabla_{\Gamma}^2 s + f_4(u,v,r,s),
\end{cases}\quad {\bfa  x}\; \text{on} \; \Gamma,\,\, t>0,
\end{cases}
\end{equation}
with coupling boundary conditions \eqref{boundary_conditions}-\eqref{hv_eqn2}. In the above, we have defined appropriately
\begin{align}
f_1 (u,v, r,s) & = \gamma_{ \Omega}  (a - u + u^2 v), \label{f1} \\
f_2 (u,v, r,s) & =  \gamma_{ \Omega}  (b-  u^2 v), \label{f2} \\
f_3 (u,v, r,s) & = \gamma_{ \Gamma}  \big(a - r + r^2 s  - \alpha_1 r + \beta_1 u + \kappa_1 v \big) , \label{f3} \\
f_4 (u,v, r,s) & =  \gamma_{ \Gamma} \big(b -  r^2 s- \alpha_2 s + \beta_2 u + \kappa_2 v \big). \label{f4}
\end{align}

\subsection{Linear stability analysis of the coupled system of BSRDEs}\label{sec:linearstability}

\begin{definition}[Uniform steady state]
A point $(u^*, v^*, r^*, s^*)$ is a uniform steady state of the coupled system of BSRDEs \eqref{bs_rdes_model} with reaction kinetics \eqref{schnakv} if it solves the nonlinear algebraic system given by
$ f_i (u^*,v^*, r^*,s^*) = 0$, for all $ i = 1, \,2, \, 3, \, 4,$
and satisfies  the boundary conditions given by \eqref{boundary_conditions}-\eqref{hv_eqn2}.
\end{definition}

\begin{proposition}[Existence and uniqueness of the uniform steady state]
The coupled system of BSRDEs \eqref{bs_rdes_model} with boundary conditions  \eqref{boundary_conditions} admits a unique steady state given by
\begin{equation}\label{uss}
(u^*,v^*, r^*,s^*) = \left(a+b, \frac{b}{(a+b)^2},a+b, \frac{b}{(a+b)^2}\right),
\end{equation}
provided the following compatibility condition on the coefficients of the coupling is satisfied
\begin{equation} \label{c1}
(\beta_1 - \alpha_1)(\kappa_2 -\alpha_2) - \kappa_1 \beta_2 = 0.
\end{equation}
\end{proposition}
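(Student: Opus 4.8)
The plan is to exploit that a spatially uniform steady state has vanishing gradients, so the Robin conditions \eqref{boundary_conditions} reduce to the purely algebraic constraints $h_1(u^*,v^*,r^*,s^*)=0$ and $h_2(u^*,v^*,r^*,s^*)=0$; one then has to solve the six equations $f_1=f_2=f_3=f_4=0$ together with $h_1=h_2=0$ in the four unknowns and read off when this (over-determined) system is consistent. First I would treat the bulk: adding $f_1=0$ to $f_2=0$ in \eqref{f1}--\eqref{f2} cancels the quadratic term $u^2v$ and yields $a+b-u^*=0$, so $u^*=a+b$; feeding this back into $f_2=0$ gives $(u^*)^2v^*=b$, i.e. $v^*=b/(a+b)^2$. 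Both coordinates are uniquely determined and involve none of the coupling parameters.

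The key step is the surface pair. By \eqref{f3}--\eqref{f4} and \eqref{hu_eqn1}--\eqref{hv_eqn2} one can write $f_3=\gamma_{\Gamma}\bigl((a-r+r^2 s)-h_1\bigr)$ and $f_4=\gamma_{\Gamma}\bigl((b-r^2 s)-h_2\bigr)$. Evaluating at the steady state and invoking $h_1=h_2=0$ from the boundary conditions, the surface equations decouple completely from the bulk and collapse to the bare Brusselator relations $a-r^*+(r^*)^2 s^*=0$ and $b-(r^*)^2 s^*=0$; exactly as in the bulk, adding them gives $r^*=a+b$ and then $s^*=b/(a+b)^2$. Since every coordinate has been forced with no free choice, this simultaneously proves uniqueness and shows that any uniform steady state must equal \eqref{uss}.

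It remains to decide for which parameters \eqref{uss} genuinely satisfies the boundary conditions, i.e. $h_1=h_2=0$ at that point. Substituting $u^*=r^*=a+b$ and $v^*=s^*=b/(a+b)^2$ into \eqref{hu_eqn1} gives $(\alpha_1-\beta_1)(a+b)^3=\kappa_1 b$, and into \eqref{hv_eqn2} gives $(\alpha_2-\kappa_2)b=\beta_2(a+b)^3$; multiplying these two identities and cancelling the common nonzero factor $b(a+b)^3$ produces $(\alpha_1-\beta_1)(\alpha_2-\kappa_2)=\kappa_1\beta_2$, which is exactly \eqref{c1} after the trivial sign rearrangement $(\alpha_1-\beta_1)(\alpha_2-\kappa_2)=(\beta_1-\alpha_1)(\kappa_2-\alpha_2)$. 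Conversely, once \eqref{c1} is assumed the two displayed scalar identities are equivalent (one follows from the other, provided $\kappa_1,\beta_2\neq 0$), so imposing either one makes \eqref{uss} a steady state.

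I expect the only genuine obstacle to be the bookkeeping in the surface step: spotting that $f_3,f_4$ are precisely the surface Brusselator kinetics minus $h_1,h_2$, so that it is the Robin boundary condition that forces the surface subsystem to decouple and reduce to a problem identical in form to the bulk one. Everything else is elementary algebra, the one point needing care being the sign bookkeeping when passing between the two factored forms of \eqref{c1}.
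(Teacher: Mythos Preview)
Your approach is exactly what the paper's one-line proof gestures at --- solve $f_i=0$ together with the vanishing of the normal derivatives, which forces $h_1=h_2=0$ --- but you have actually carried out the algebra the paper omits. The bulk and surface computations are correct, and your observation that it is the Robin condition that collapses the surface kinetics back to the bare Brusselator is precisely the mechanism at work.

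The one point worth sharpening is the logical status of \eqref{c1}. Your multiplication step correctly shows that $h_1=0$ and $h_2=0$ at \eqref{uss} together \emph{imply} \eqref{c1}, so \eqref{c1} is necessary. The converse, however, is not true as stated: \eqref{c1} alone does not force both $h_i$ to vanish at \eqref{uss}. For instance $\alpha_1=1$, $\beta_1=\kappa_1=\alpha_2=\beta_2=\kappa_2=0$ satisfies \eqref{c1} trivially, yet $h_1=r^*=a+b\neq 0$. What your argument actually establishes is that \eqref{c1} is the single scalar relation obtained by eliminating the ratio $(a+b)^3/b$ between the two linear constraints $h_1=0$ and $h_2=0$; under \eqref{c1} (and the non-degeneracy $\kappa_1,\beta_2\neq 0$ you flag) the two constraints become equivalent, but one of them must still be imposed separately. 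You have identified this honestly in your final paragraph. This is an imprecision in the proposition itself rather than a flaw in your reasoning, and the paper's own proof does not address it either.
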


\begin{proof}
The proof follows immediately from the definition of the uniform steady state satisfying reaction kinetics \eqref{f1}-\eqref{f4}. It must be noted that in deriving this unique uniform steady state the compatibility condition  \eqref{c1} coupling bulk and surface dynamics must be satisfied.  
\end{proof}
\begin{remark}
The constraint condition \eqref{c1} on the parameter values $\alpha_i$, $\beta_i$ and $\kappa_i$, $i=1,2$ is a general case of the specific parameter values given in  \cite{macdonald2013} where the following parameter values where selected
$\alpha_1 = \beta_1 = \frac{5}{12}$,
$\alpha_2 = \kappa_2 = 5$,
$\kappa_1 = 0$, and 
$\beta_2 = 0$.

\end{remark}
\begin{remark}
Note that there exists an infinite number of solutions to problem \eqref{c1}.
\end{remark}

\subsubsection{Linear stability analysis in the absence of diffusion}

Next, we study Turing diffusion-driven instability for the coupled system of BSRDEs \eqref{bs_rdes_model0}-\eqref{hv_eqn2} with reaction kinetics \eqref{schnakv}. To proceed, we first consider the linear stability of the spatially uniform steady state. For convenience's sake, let us denote by
${\bfa  w} = \begin{pmatrix} u ,\; v, \; r, \; s \end{pmatrix}^T,$
the vector of the species  $u$, $v$, $r$ and $s$. Furthermore, defining the vector ${\bfa \xi}$ such that $\vert\xi_i\vert <<1$ for all $i=1$, $2$, $3$ and $4$, it follows that writing
${\bfa  w} = {\bfa  w}^* + {\bfa  \xi},$
the linearized system of coupled BSRDEs can be posed as
\begin{equation}\label{linear_ode}
{\bfa  w}_t = {\bfa  \xi}_t = {\bfa  J}_{\bfa  F} {\bfa  \xi},
\end{equation}
where ${\bfa  J}_{\bfa  F}$ represents the Jacobian matrix representing the first linear terms of the linearization process. Its entries are defined by
\begin{align}
{\bfa  J}_{\bfa  F} = \begin{pmatrix} \frac{\partial f_1}{\partial u} &  \frac{\partial f_1}{\partial v} &  \frac{\partial f_1}{\partial r} &  \frac{\partial f_1}{\partial s} \\
\frac{\partial f_2}{\partial u} &  \frac{\partial f_2}{\partial v} &  \frac{\partial f_2}{\partial r} &  \frac{\partial f_2}{\partial s} \\
\frac{\partial f_3}{\partial u} &  \frac{\partial f_3}{\partial v} &  \frac{\partial f_3}{\partial r} &  \frac{\partial f_3}{\partial s} \\
\frac{\partial f_4}{\partial u} &  \frac{\partial f_4}{\partial v} &  \frac{\partial f_4}{\partial r} &  \frac{\partial f_4}{\partial s}
  \end{pmatrix} & = \begin{pmatrix} {f_1}_u& {f_1}_v & 0 & 0 \\
 {f_2}_u & {f_2}_v & 0 & 0 \\
{f_3}_u &  {f_3}_v & {f_3}_r & {f_3}_s \\
 {f_4}_u & {f_4}_v & {f_4}_r & {f_4}_s
  \end{pmatrix} \notag \\
  & := \begin{pmatrix} {f}_u& {f}_v & 0 & 0 \\
 {g}_u & {g}_v & 0 & 0 \\
-{h_1}_u &  -{h_1}_v & {f}_r - {h_1}_r & {f}_s - {h_1}_s \\
-{h_2}_u &  -{h_2}_v & {g}_r - {h_2}_r & {g}_s - {h_2}_s 
  \end{pmatrix}.
\label{JF}
\end{align}
where by definition ${f_1}_u: =\frac{\partial f_1}{\partial u}$ represents a partial derivative of $f_1 (u,v)$ with respect to $u$.  We are looking for solutions to the system of linear ordinary differential equations \eqref{linear_ode} which are of the form
${\bfa  \xi} \propto e^{\lambda t}$.
Substituting into \eqref{linear_ode}, results in the following classical eigenvalue problem
\begin{equation}
\Big| \lambda {\bfa  I} - {\bfa  J}_{\bfa  F}  \Big| = 0,
\end{equation}
where ${\bfa  I}$ is the identity matrix. Making appropriate substitutions and carrying out standard calculations we obtain the following dispersion relation for $\lambda$
\begin{align}
\Big| \lambda {\bfa  I} - {\bfa  J}_{\bfa  F}  \Big| & = \begin{vmatrix} \lambda - {f_1}_u& {f_1}_v & 0 & 0 \\
 {f_2}_u &  \lambda - {f_2}_v & 0 & 0 \\
{f_3}_u &  {f_3}_v &  \lambda - {f_3}_r & {f_3}_s \\
 {f_4}_u & {f_4}_v & {f_4}_r & \lambda -  {f_4}_s
  \end{vmatrix} =0, \notag \\
\iff p_4 (\lambda) &= \lambda^4 + a_1 \lambda^3 + a_2 \lambda^2 + a_3 \lambda  + a_4 = 0, \notag
\end{align}
where
\begin{align}
a_1 &  = - \Big({f_1}_u + {f_2}_v+ {f_3}_r + {f_4}_s \Big), \label{a1} \\
a_2 & = ({f_1}_u {f_2}_v - {f_1}_v {f_2}_u) + ({f_3}_r {f_4}_s - {f_3}_s {f_4}_r)  + ({f_1}_u + {f_2}_v)({f_3}_r + {f_4}_s), \label{a2}\\
a_3 & = -\Big[({f_1}_u {f_2}_v - {f_1}_v {f_2}_u)({f_3}_r + {f_4}_s) +  ({f_3}_r {f_4}_s - {f_3}_s {f_4}_r) ({f_1}_u + {f_2}_v)\Big], \label{a3} \\
a_4 & = ({f_1}_u {f_2}_v - {f_1}_v {f_2}_u)  ({f_3}_r {f_4}_s - {f_3}_s {f_4}_r). \label{a4}
\end{align}
For convenience's sake, let us denote by
\begin{equation}
\left({\bfa  J}_{\bfa  F}\right)_{\Omega} : =  \begin{pmatrix} {f_1}_u& {f_1}_v  \\
 {f_2}_u & {f_2}_v
 \end{pmatrix} \quad 
 \text{and} \quad 
\left({\bfa  J}_{\bfa  F}\right)_{\Gamma} : =  \begin{pmatrix} {f_3}_r & {f_3}_s \\
 {f_4}_r & {f_4}_s \end{pmatrix}
\end{equation}
the submatrices of  ${\bfa  J}_{\bfa  F}$ corresponding to the bulk reaction kinetics and the surface reaction kinetics respectively. We can now define
\begin{align}
& \text{Tr} \left({\bfa  J}_{\bfa  F}\right)  := {f_1}_u + {f_2}_v+ {f_3}_r + {f_4}_s, \qquad
\text{Tr} \left({\bfa  J}_{\bfa  F}\right)_{\Omega}   := {f_1}_u + {f_2}_v, \quad 
\text{Tr} \left({\bfa  J}_{\bfa  F}\right)_{\Gamma}   := {f_3}_r + {f_4}_s, \notag \\
& \text{Det} \left({\bfa  J}_{\bfa  F}\right)_{\Omega}   := {f_1}_u {f_2}_v - {f_1}_v {f_2}_u, \quad \text{and} \quad 
\text{Det} \left({\bfa  J}_{\bfa  F}\right)_{\Gamma}   := {f_3}_r {f_4}_s - {f_3}_s {f_4}_r. \notag
\end{align}

\begin{theorem}[Necessary and sufficient conditions for $\text{Re} (\lambda)<0$]\label{ns_cond_ad}
The necessary and sufficient conditions such that the zeros of the polynomial $p_4 (\lambda)$ have $\text{Re} (\lambda)<0$ are given by the following conditions  
\begin{align}
& \text{Tr} \left({\bfa  J}_{\bfa  F}\right)   <0, \label{ws_cond1} \\
& \text{Det} \left({\bfa  J}_{\bfa  F}\right)_{\Omega} + \text{Det} \left({\bfa  J}_{\bfa  F}\right)_{\Gamma} + \text{Tr} \left({\bfa  J}_{\bfa  F}\right)_{\Omega}\text{Tr} \left({\bfa  J}_{\bfa  F}\right)_{\Gamma}  >0, \ \label{ws_cond2}\\
& \text{Det} \left({\bfa  J}_{\bfa  F}\right)_{\Omega}  \text{Tr} \left({\bfa  J}_{\bfa  F}\right)_{\Gamma} + \text{Det} \left({\bfa  J}_{\bfa  F}\right)_{\Gamma} \text{Tr} \left({\bfa  J}_{\bfa  F}\right)_{\Omega}  <0,  \label{ws_cond3} \\
& \text{Det} \left({\bfa  J}_{\bfa  F}\right)_{\Omega}  \text{Det} \left({\bfa  J}_{\bfa  F}\right)_{\Gamma}  >0,  \label{ws_cond4} \\
& \Big[\text{Tr} \left({\bfa  J}_{\bfa  F}\right)_{\Gamma} \text{Tr} \left({\bfa  J}_{\bfa  F}\right)  - 2  \text{Det} \left({\bfa  J}_{\bfa  F}\right)_{\Omega}  \Big]  \text{Tr} \left({\bfa  J}_{\bfa  F}\right)_{\Omega}  + \Big[\text{Tr} \left({\bfa  J}_{\bfa  F}\right)_{\Omega} \text{Tr} \left({\bfa  J}_{\bfa  F}\right)  \Big.  \notag \\ & \Big. \hspace{20em}- 2  \text{Det} \left({\bfa  J}_{\bfa  F}\right)_{\Gamma}  \Big]  \text{Tr} \left({\bfa  J}_{\bfa  F}\right)_{\Gamma}    >0,   \label{ws_cond5} \\
& \left[ \big(\text{Det} \left({\bfa  J}_{\bfa  F}\right)_{\Omega}  +  \text{Det} \left({\bfa  J}_{\bfa  F}\right)_{\Gamma} \big)^2 - \big( \text{Det} \left({\bfa  J}_{\bfa  F}\right)_{\Omega} \text{Tr} \left({\bfa  J}_{\bfa  F}\right)_{\Gamma}  \right. \notag \\ & \left. \hspace{8em} + \text{Det} \left({\bfa  J}_{\bfa  F}\right)_{\Gamma} \text{Tr} \left({\bfa  J}_{\bfa  F}\right)_{\Omega} \big) \text{Tr} \left({\bfa  J}_{\bfa  F} \right)  \right] \text{Tr} \left({\bfa  J}_{\bfa  F}\right)_{\Omega} \text{Tr} \left({\bfa  J}_{\bfa  F}\right)_{\Gamma}>0.  \label{ws_cond6}
\end{align}
\end{theorem}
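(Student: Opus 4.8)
The plan is to apply the Routh--Hurwitz criterion to the quartic $p_4(\lambda)=\lambda^4+a_1\lambda^3+a_2\lambda^2+a_3\lambda+a_4$, whose coefficients are given explicitly in \eqref{a1}--\eqref{a4}. Recall that for a real quartic the necessary and sufficient conditions for all roots to have negative real part are $a_1>0$, $a_3>0$, $a_4>0$, and the Hurwitz determinant inequality $a_1a_2a_3>a_3^2+a_1^2a_4$ (equivalently $a_1a_2-a_3>0$ together with $a_3(a_1a_2-a_3)-a_1^2a_4>0$). The whole proof is then a matter of substituting the expressions for $a_1,\dots,a_4$ in terms of the traces and determinants of the block matrices $\left({\bfa J}_{\bfa F}\right)_{\Omega}$ and $\left({\bfa J}_{\bfa F}\right)_{\Gamma}$ and simplifying, exploiting the block-lower-triangular structure of ${\bfa J}_{\bfa F}$ exhibited in \eqref{JF}.

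First I would observe that because ${\bfa J}_{\bfa F}$ is block lower triangular, its characteristic polynomial factors as $p_4(\lambda)=p_\Omega(\lambda)\,p_\Gamma(\lambda)$, where $p_\Omega$ and $p_\Gamma$ are the characteristic polynomials of the two $2\times2$ blocks; this is exactly what \eqref{a1}--\eqref{a4} encode, since one checks $a_1=-(\mathrm{Tr}_\Omega+\mathrm{Tr}_\Gamma)$, $a_2=\mathrm{Det}_\Omega+\mathrm{Det}_\Gamma+\mathrm{Tr}_\Omega\mathrm{Tr}_\Gamma$, $a_3=-(\mathrm{Det}_\Omega\mathrm{Tr}_\Gamma+\mathrm{Det}_\Gamma\mathrm{Tr}_\Omega)$, $a_4=\mathrm{Det}_\Omega\mathrm{Det}_\Gamma$, where I abbreviate $\mathrm{Tr}_\Omega:=\mathrm{Tr}\left({\bfa J}_{\bfa F}\right)_{\Omega}$ etc. Then I would translate each Routh--Hurwitz inequality: $a_1>0$ gives \eqref{ws_cond1}; $a_4>0$ gives \eqref{ws_cond4}; $a_3>0$ gives \eqref{ws_cond3}; and $a_1a_2-a_3>0$, after collecting terms, gives \eqref{ws_cond2} (one finds $a_1a_2-a_3=-\mathrm{Tr}\left({\bfa J}_{\bfa F}\right)\big(\mathrm{Det}_\Omega+\mathrm{Det}_\Gamma+\mathrm{Tr}_\Omega\mathrm{Tr}_\Gamma\big)$ plus lower-order pieces, so that combined with \eqref{ws_cond1} it reduces to \eqref{ws_cond2} — this is one of the steps needing care). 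The remaining two conditions \eqref{ws_cond5} and \eqref{ws_cond6} come from expanding the final Hurwitz determinant $a_3(a_1a_2-a_3)-a_1^2a_4>0$; regrouping this polynomial in the trace/determinant variables and factoring out the common factors $\mathrm{Tr}_\Omega$, $\mathrm{Tr}_\Gamma$ (which are the natural scales appearing) should yield precisely the two displayed inequalities, with \eqref{ws_cond5} capturing the part linear in $a_1a_2-a_3$ and \eqref{ws_cond6} the part involving $a_1^2a_4$.

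I would present the computation by first verifying the four coefficient identities (routine expansion of the $4\times4$ determinant, or the block-triangular factorization argument, which is cleaner), then stating the standard quartic Routh--Hurwitz conditions as a lemma or citing them, and finally performing the substitution for each of the six inequalities in turn. The main obstacle I anticipate is purely algebraic: the last Hurwitz determinant is a degree-four expression in six variables, and showing that it regroups exactly into the form \eqref{ws_cond5}--\eqref{ws_cond6} — in particular that the stated factorizations by $\mathrm{Tr}_\Omega\mathrm{Tr}_\Gamma$ are correct and that no spurious terms remain — will require a careful, bookkeeping-heavy expansion. It is worth double-checking whether \eqref{ws_cond5} and \eqref{ws_cond6} are meant to be two independent consequences of the single Hurwitz inequality $a_1a_2a_3-a_3^2-a_1^2a_4>0$ or whether, given \eqref{ws_cond1}--\eqref{ws_cond4}, that single inequality splits naturally as the sum of the two displayed quantities; I would reconcile this before committing to the final write-up, since the cleanest proof presents \eqref{ws_cond5}--\eqref{ws_cond6} as the two additive components of the last Hurwitz determinant once the common factor $\mathrm{Tr}\left({\bfa J}_{\bfa F}\right)_{\Omega}\mathrm{Tr}\left({\bfa J}_{\bfa F}\right)_{\Gamma}$ is extracted.
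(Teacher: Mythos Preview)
Your overall strategy is right and matches the paper: apply the Routh--Hurwitz criterion to the quartic $p_4(\lambda)$ and translate each inequality into the trace/determinant language of the two $2\times2$ blocks. The block-lower-triangular observation is also correct and useful.

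However, your mapping of the six displayed conditions to the Routh--Hurwitz inequalities is off, and this is where your anticipated ``bookkeeping-heavy expansion'' would go wrong. The paper's six conditions are simply $a_1>0$, $a_2>0$, $a_3>0$, $a_4>0$, together with the two Hurwitz determinants $a_1a_2-a_3>0$ and $a_3(a_1a_2-a_3)-a_1^2a_4>0$, in that order. In particular:
\begin{itemize}
\item Condition \eqref{ws_cond2} is \emph{exactly} $a_2>0$: from \eqref{a2}, $a_2=\text{Det}\left({\bfa J}_{\bfa F}\right)_{\Omega}+\text{Det}\left({\bfa J}_{\bfa F}\right)_{\Gamma}+\text{Tr}\left({\bfa J}_{\bfa F}\right)_{\Omega}\text{Tr}\left({\bfa J}_{\bfa F}\right)_{\Gamma}$. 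There is nothing to simplify.
\item Condition \eqref{ws_cond5} is the second Hurwitz determinant $a_1a_2-a_3>0$, not part of the third one. After cancellation one finds $a_1a_2-a_3=-\text{Tr}_{\Omega}\text{Tr}_{\Gamma}\,\text{Tr}\left({\bfa J}_{\bfa F}\right)-\text{Det}_{\Omega}\text{Tr}_{\Omega}-\text{Det}_{\Gamma}\text{Tr}_{\Gamma}$, which the paper rewrites symmetrically as \eqref{ws_cond5}.
\item Condition \eqref{ws_cond6} alone is the third Hurwitz determinant $a_3(a_1a_2-a_3)-a_1^2a_4>0$, after factoring out $\text{Tr}_{\Omega}\text{Tr}_{\Gamma}$.
\end{itemize}
So \eqref{ws_cond5} and \eqref{ws_cond6} are not two pieces of a single inequality; they are two separate Hurwitz determinants. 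This resolves the uncertainty you flagged at the end. Note also that the paper's list is deliberately redundant (positivity of all four coefficients \emph{and} the two principal minors), whereas the minimal Routh--Hurwitz set for a quartic needs only four inequalities; once you correct the mapping, the algebra for each of the six becomes much lighter than you feared.
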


 \begin{proof}
The proof enforces that $p_4 (\lambda)$ is a Hurwitz polynomial and therefore satisfies the Routh-Hurwitz criterion in order for $\text{Re} (\lambda)<0$. The first condition to be satisfied is that  $a_4 \ne 0$ otherwise $\lambda =0$ is a trivial root, thereby reducing the 4-th order polynomial to a cubic polynomial.
The first four conditions are a result of requiring that each coefficient $a_i$ with $i=1$, $2$, $3$ and $4$ of the polynomial $p_4 (\lambda)$ are all positive. The rest of the conditions are derived as shown below.

We require that the determinant of the matrix
 \[ \begin{vmatrix} a_1 & a_3 \\
 1 & a_2 \end{vmatrix} = a_1 a_2 - a_3 >0.
\]
Substituting $a_1$, $a_2$ and $a_3$ appropriately we obtain
\begin{align}
 \Big[ \text{Det} \left({\bfa  J}_{\bfa  F}\right)_{\Omega}  & + \text{Det} \left({\bfa  J}_{\bfa  F}\right)_{\Gamma}   + \text{Tr} \left({\bfa  J}_{\bfa  F}\right)_{\Omega} \text{Tr} \left({\bfa  J}_{\bfa  F}\right)_{\Gamma}\Big]  \Big[ - \text{Tr} \left({\bfa  J}_{\bfa  F}\right) \Big]   \notag \\
&+ \Big[ \text{Det} \left({\bfa  J}_{\bfa  F}\right)_{\Omega} \text{Tr} \left({\bfa  J}_{\bfa  F}\right)_{\Gamma} + \text{Det} \left({\bfa  J}_{\bfa  F}\right)_{\Gamma} \text{Tr} \left({\bfa  J}_{\bfa  F}\right)_{\Omega}  \Big] > 0.
\end{align}
Exploiting the fact that
\[ \text{Tr} \left({\bfa  J}_{\bfa  F}\right) = \text{Tr} \left({\bfa  J}_{\bfa  F}\right)_{\Omega}+\text{Tr} \left({\bfa  J}_{\bfa  F}\right)_{\Gamma},\]
it then follows that
\begin{align}
&  a_1 a_2 - a_3  \notag \\
& = \text{Tr} \left({\bfa  J}_{\bfa  F}\right)_{\Omega} \text{Tr} \left({\bfa  J}_{\bfa  F}\right)_{\Gamma} \text{Tr} \left({\bfa  J}_{\bfa  F}\right) - \Big[ \text{Det} \left({\bfa  J}_{\bfa  F}\right)_{\Omega} \text{Tr} \left({\bfa  J}_{\bfa  F}\right)_{\Omega} + \text{Det} \left({\bfa  J}_{\bfa  F}\right)_{\Gamma} \text{Tr} \left({\bfa  J}_{\bfa  F}\right)_{\Gamma}\Big] >0 \notag
 \end{align}
if and only if
 \begin{align}
 \frac{1}{2} \Big[\text{Tr} \left({\bfa  J}_{\bfa  F}\right)_{\Gamma} \text{Tr} \left({\bfa  J}_{\bfa  F}\right)  &-  2  \text{Det} \left({\bfa  J}_{\bfa  F}\right)_{\Omega}\Big] \text{Tr} \left({\bfa  J}_{\bfa  F}\right)_{\Omega}  \notag \\
& + \frac{1}{2} \Big[\text{Tr} \left({\bfa  J}_{\bfa  F}\right)_{\Omega} \text{Tr} \left({\bfa  J}_{\bfa  F}\right) -  2  \text{Det} \left({\bfa  J}_{\bfa  F}\right)_{\Gamma}\Big] \text{Tr} \left({\bfa  J}_{\bfa  F}\right)_{\Gamma}>0.   \notag
\end{align}
Multiplying throughout by 2 we obtain condition \eqref{ws_cond5} in Theorem \ref{ns_cond_ad}.

The last condition results from imposing the condition that
\[ \begin{vmatrix} a_1 & a_3 & 0 \\
 1 & a_2 & a_4 \\
 0 & a_1 & a_3  \end{vmatrix} = a_3 (a_1 a_2 - a_3) - a_1^2 a_4  >0.
\]
It can be shown that
\begin{align}
 & a_3 (a_1 a_2 - a_3)  \notag \\
& = - \Big[ \text{Det} \left({\bfa  J}_{\bfa  F}\right)_{\Omega} \text{Tr} \left({\bfa  J}_{\bfa  F}\right)_{\Omega}  \text{Tr}^2 \left({\bfa  J}_{\bfa  F}\right)_{\Gamma}  +  \text{Det} \left({\bfa  J}_{\bfa  F}\right)_{\Gamma} \text{Tr}^2 \left({\bfa  J}_{\bfa  F}\right)_{\Omega} \text{Tr} \left({\bfa  J}_{\bfa  F}\right)_{\Gamma} \Big] \text{Tr} \left({\bfa  J}_{\bfa  F}\right)  \notag \\
&  + \text{Det}^2 \left({\bfa  J}_{\bfa  F}\right)_{\Omega} \text{Tr} \left({\bfa  J}_{\bfa  F}\right)_{\Omega}  \text{Tr} \left({\bfa  J}_{\bfa  F}\right)_{\Gamma}  + \text{Det} \left({\bfa  J}_{\bfa  F}\right)_{\Omega} \text{Det} \left({\bfa  J}_{\bfa  F}\right)_{\Gamma} \text{Tr}^2 \left({\bfa  J}_{\bfa  F}\right)_{\Omega}   \notag \\
 &  + \text{Det} \left({\bfa  J}_{\bfa  F}\right)_{\Omega} \text{Det} \left({\bfa  J}_{\bfa  F}\right)_{\Gamma}  \text{Tr}^2 \left({\bfa  J}_{\bfa  F}\right)_{\Omega}  + \text{Det}^2 \left({\bfa  J}_{\bfa  F}\right)_{\Gamma}  \text{Tr} \left({\bfa  J}_{\bfa  F}\right)_{\Omega}\text{Tr} \left({\bfa  J}_{\bfa  F}\right)_{\Gamma}. \label{part1}
\end{align}
On the other hand,
\begin{align}
& a_1^2 a_4  = \text{Tr}^2 \left({\bfa  J}_{\bfa  F}\right) \text{Det} \left({\bfa  J}_{\bfa  F}\right)_{\Omega}  \text{Det} \left({\bfa  J}_{\bfa  F}\right)_{\Gamma} \notag \\
& = \Big( \text{Tr} \left({\bfa  J}_{\bfa  F}\right)_{\Omega}+\text{Tr} \left({\bfa  J}_{\bfa  F}\right)_{\Gamma}\Big)^2 \text{Det} \left({\bfa  J}_{\bfa  F}\right)_{\Omega}  \text{Det} \left({\bfa  J}_{\bfa  F}\right)_{\Gamma}   \notag \\
&  = \text{Det} \left({\bfa  J}_{\bfa  F}\right)_{\Omega}  \text{Det} \left({\bfa  J}_{\bfa  F}\right)_{\Gamma} \text{Tr}^2 \left({\bfa  J}_{\bfa  F}\right)_{\Omega}  + 2 \text{Det} \left({\bfa  J}_{\bfa  F}\right)_{\Omega}  \text{Det} \left({\bfa  J}_{\bfa  F}\right)_{\Gamma} \text{Tr} \left({\bfa  J}_{\bfa  F}\right)_{\Omega} \text{Tr} \left({\bfa  J}_{\bfa  F}\right)_{\Gamma}  \notag  \\
& \hspace{3cm} + \text{Det} \left({\bfa  J}_{\bfa  F}\right)_{\Omega}  \text{Det} \left({\bfa  J}_{\bfa  F}\right)_{\Gamma}  \text{Tr}^2 \left({\bfa  J}_{\bfa  F}\right)_{\Gamma}.  \label{part2}
\end{align}
Hence combining \eqref{part1} and \eqref{part2} and simplifying conveniently we have
\begin{equation}
\begin{split}
a_3 (a_1 a_2 - a_3) - a_1^2 a_4 = \left[ \big(\text{Det} \left({\bfa  J}_{\bfa  F}\right)_{\Omega}  +  \text{Det} \left({\bfa  J}_{\bfa  F}\right)_{\Gamma} \big)^2 - \big( \text{Det} \left({\bfa  J}_{\bfa  F}\right)_{\Omega} \text{Tr} \left({\bfa  J}_{\bfa  F}\right)_{\Gamma} \right.  \notag \\
 \left. + \text{Det} \left({\bfa  J}_{\bfa  F}\right)_{\Gamma} \text{Tr} \left({\bfa  J}_{\bfa  F}\right)_{\Omega} \big) \text{Tr} \left({\bfa  J}_{\bfa  F} \right)  \right] \times \text{Tr} \left({\bfa  J}_{\bfa  F}\right)_{\Omega} \text{Tr} \left({\bfa  J}_{\bfa  F}\right)_{\Gamma} >0,
\end{split}
\end{equation}
 resulting in condition \eqref{ws_cond6}. 
 \end{proof}

\begin{remark}
The characteristic polynomial 
\[ p_4 (\lambda) = \lambda^4 + a_1 \lambda^3 + a_2 \lambda^2 + a_3 \lambda  + a_4\]
can also be written more compactly in the form of 
 \[p_4 (\lambda) = \Big( \lambda^2 + \lambda ({f_1}_u + {f_2}_v)  + {f_1}_u {f_2}_v - {f_1}_v {f_2}_u  \Big) \Big(\lambda^2 + \lambda ({f_3}_r + {f_4}_s)  + {f_3}_r {f_4}_s - {f_3}_s {f_4}_r  \Big) \]
 thereby coupling the bulk and surface dispersion relations in the absence of spatial variations.
\end{remark}

\subsubsection{Linear stability analysis in the presence of diffusion}

 Next we introduce spatial variations and study under what conditions the uniform steady state is linearly unstable. We linearize around the uniform steady state by taking small spatially varying perturbations of the form
\begin{equation}\label{pertb}
{\bfa  w} ({\bfa  x},t) = {\bfa  w}^* +\epsilon {\bfa  \xi} ({\bfa  x},t), \quad \text{with} \quad \epsilon << 1.
\end{equation}
Substituting \eqref{pertb} into the coupled system of  BSRDEs   \eqref{bs_rdes_model0}-\eqref{hv_eqn2} with reaction kinetics \eqref{schnakv} we obtain a linearized system of partial differential equations
\begin{align}
{\xi_1}_t & = \nabla^2 \xi_1 + \gamma_{ \Omega} \Big(f_u \xi_1 + f_v \xi_2 \Big), \label{x1t} \\
{\xi_2}_t & =d_{ \Omega} \nabla^2 \xi_2 + \gamma_{ \Omega} \Big(g_u \xi_1 + g_v \xi_2 \Big), \label{x2t} \\
{\xi_3}_t & = \nabla_{ \Gamma}^2 \xi_3 + \gamma_{ \Gamma} \Big(f_r \xi_3 + f_s \xi_4 - {h_1}_u \xi_1 - {h_1}_v \xi_2- {h_1}_r \xi_3- {h_1}_s \xi_4 \Big), \label{x3t} \\
{\xi_4}_t & = d_{ \Gamma} \nabla_{ \Gamma}^2 \xi_4 + \gamma_{ \Gamma} \Big(g_r \xi_3 + g_s \xi_4 - {h_2}_u \xi_1 - {h_2}_v \xi_2- {h_2}_r \xi_3- {h_2}_s \xi_4 \Big), \label{x4t}
\end{align}
with linearised boundary conditions
\begin{align}
\frac{\partial \xi_1}{\partial \nu} & =  \gamma_{ \Gamma} \Big({h_1}_u \xi_1 + {h_1}_v \xi_2+ {h_1}_r \xi_3+ {h_1}_s \xi_4 \Big), \label{bcx1n} \\
d_{ \Gamma} \frac{\partial \xi_2}{\partial \nu} & =  \gamma_{ \Gamma} \Big({h_2}_u \xi_1 + {h_2}_v \xi_2+ {h_2}_r \xi_3+ {h_2}_s \xi_4 \Big). \label{bcx2n}
\end{align}
In the above, we have used the original reaction kinetics for the purpose of clarity.

In order to proceed, we restrict our analysis to circular and spherical domains where we can transform the cartesian coordinates into polar coordinates and be able to exploit the method of separation of variables. Without loss of generality, we write the following eigenvalue problem in the bulk
\begin{equation}
\nabla^2 \psi_{k_{l,m}} (r) = - k_{l,m}^2 \psi_{k_{l,m}} (r), \quad 0 < r < 1,
\end{equation}
where each $\psi_k$ satisfies the boundary conditions \eqref{bcx1n} and \eqref{bcx2n}.
On the surface the eigenvalue problem is posed as
\begin{equation}
\nabla_{ \Gamma}^2 \phi (y) = - l (l+1) \phi (y), \quad y \in \Gamma.
\end{equation}
\begin{remark}
For the case of circular and spherical domains, if $r=1$, then $k_{l,m}^2 = l(l+1)$.
\end{remark}

Taking $x\in \mathbb{B}$,  $y \in \Gamma$, then writing in polar coordinates $x=ry$, $r\in (0,1)$ we can define, for all $l \in \mathbb{N}_0$,  $m \in \mathbb{Z}$, $|m| \le l$, the following power series solutions \cite{ratz2012,ratz2013}
\begin{align}
\xi_1 (ry, t) & = \sum u_{l,m} (t) \psi_{k_{l,m}} (r) \phi_{l,m} (y), \quad
\xi_2 (ry, t)  = \sum v_{l,m} (t) \psi_{k_{l,m}} (r) \phi_{l,m} (y), \label{xry2} \\
\xi_3 (y, t) & = \sum r_{l,m} (t)  \phi_{l,m} (y), \quad \text{and} \quad 
\xi_4 (y, t)  = \sum s_{l,m} (t)  \phi_{l,m} (y). \label{xry4}
\end{align}
On the  surface, substituting the power series solutions \eqref{xry4} into \eqref{x3t} and \eqref{x4t} we have
\begin{align}
\frac{d r_{l,m}}{dt} & = - l(l+1) r_{l,m} + \gamma_{ \Gamma} \Big(f_r r_{l,m} + f_s s_{l,m} \Big) \notag \\
& -\gamma_{ \Gamma}\Big({h_1}_u u_{l,m} \psi_{k_{l,m}} (1)  + {h_1}_v v_{l,m} \psi_{k_{l,m}} (1) + {h_1}_r r_{l,m}+ {h_1}_s s_{l,m} \Big), \label{dx3t} \\
\frac{d s_{l,m}}{dt} & = - d_{ \Gamma} l(l+1)  s_{l,m} + \gamma_{ \Gamma} \Big(g_r r_{l,m} + g_s s_{l,m} \Big) \notag \\
& -\gamma_{ \Gamma}\Big({h_2}_u u_{l,m} \psi_{k_{l,m}} (1)  + {h_2}_v v_{l,m} \psi_{k_{l,m}} (1) + {h_2}_r r_{l,m}+ {h_2}_s s_{l,m} \Big). \label{dx4t}
\end{align}
Similarly, substituting the power series solutions \eqref{xry2} into the bulk equations \eqref{x1t} and \eqref{x2t} we obtain the following system of ordinary differential equations
\begin{align}
\frac{d u_{l,m}}{dt} & = - k_{l,m}^2 u_{l,m} + \gamma_{ \Omega} \Big(f_u u_{l,m} + f_v v_{l,m} \Big), \label{dx1t} \\
\frac{d v_{l,m}}{dt} & = - d_{ \Omega} k_{l,m}^2 v_{l,m} + \gamma_{ \Omega} \Big(g_u u_{l,m} + g_v v_{l,m} \Big). \label{dx2t}
\end{align}
Equations \eqref{dx1t} and \eqref{dx2t} are supplemented with boundary conditions
\begin{align}
u_{l,m} \psi'_{k_{l,m}} (1) & = \gamma_{ \Gamma}\Big({h_1}_u u_{l,m} \psi_{k_{l,m}} (1)  + {h_1}_v v_{l,m} \psi_{k_{l,m}} (1) + {h_1}_r r_{l,m}+ {h_1}_s s_{l,m} \Big), \label{bcdx3t} \\
d_{ \Omega} v_{l,m} \psi'_{k_{l,m}} (1) & =\gamma_{ \Gamma}\Big({h_2}_u u_{l,m} \psi_{k_{l,m}} (1)  + {h_2}_v v_{l,m} \psi_{k_{l,m}} (1) + {h_2}_r r_{l,m}+ {h_2}_s s_{l,m} \Big), \label{bcdx4t}
\end{align}
where ${\psi'_{k_{l,m}}} : = \Big. \frac{d \psi_{k_{l,m}} (r)}{d r} \Big|_{r=1}$.
Writing
\[ \begin{pmatrix} u_{l,m}, v_{l,m},  r_{l,m},  s_{l,m},  \end{pmatrix}^T  = \begin{pmatrix} u_{l,m}^0,  v_{l,m}^0, r_{l,m}^0, s_{l,m}^0 \end{pmatrix}^T e^{\lambda_{l,m} t}, \]
and substituting into the system of ordinary differential equations \eqref{dx3t}-\eqref{dx2t}, we obtain the following eigenvalue problem
\begin{equation}
\Big(\lambda_{l,m} {\bfa I} + {\bfa M} \Big) {\bfa \xi}_{l,m}^0 = {\bfa 0}
\end{equation}
where
\begin{equation}
{\bfa M} = \begin{pmatrix}
 k_{l,m}^2 - \gamma_{ \Omega}f_u & -\gamma_{ \Omega}f_v & 0 & 0 \\
 - \gamma_{ \Omega} g_u & d_{ \Omega} k_{l,m}^2 - \gamma_{ \Omega} g_v & 0 & 0 \\
 \psi'_{k_{l,m}} (1) &  0 &  l(l+1) -\gamma_{ \Gamma} f_r &-\gamma_{ \Gamma} f_s \\
 0 & d_{ \Omega} \psi'_{k_{l,m}} (1) & -\gamma_{ \Gamma} g_r & d_{ \Gamma} l(l+1)  -\gamma_{ \Gamma} g_s
\end{pmatrix}, \notag
\end{equation}
and
\[{\bfa \xi}_{l,m}^0 =  \begin{pmatrix} u_{l,m}^0, v_{l,m}^0,  r_{l,m}^0, s_{l,m}^0 \end{pmatrix}^T.\]
Note that the boundary conditions \eqref{bcdx3t} and \eqref{bcdx4t} have been applied appropriately to the surface linearised reaction-diffusion equations. Since
\[ \begin{pmatrix} u_{l,m}^0, v_{l,m}^0, r_{l,m}^0, s_{l,m}^0 \end{pmatrix}^T \ne \begin{pmatrix} 0,  0,  0,  0 \end{pmatrix}^T, \]
it follows that the coefficient matrix must be singular, hence we require that
\[ \Big|\lambda_{l,m} {\bfa I} + {\bfa M} \Big|  = 0.\]
 Straight forward calculations show that the eigenvalue $\lambda_{l,m}$ solves the following dispersion relation written in compact form as
\begin{equation}
\Big(\lambda_{l,m}^2 + \text{Tr} \left({\bfa  M}\right)_{\Omega} \lambda_{l,m} + \text{Det} \left({\bfa  M} \right)_{\Omega}  \Big) \Big( \lambda_{l,m}^2 +  \text{Tr} \left({\bfa  M} \right)_{\Gamma} \lambda_{l,m} + \text{Det} \left({\bfa  M} \right)_{\Gamma}  \Big) = 0,
\end{equation}
where we have defined conveniently
\begin{align}
 \text{Tr} ({\bfa  M})_{\Omega} & :=  (d_{ \Omega} +1)k_{l,m}^2  -\gamma_{ \Omega} (f_u + g_v),  \notag \\
 \text{Tr} ({\bfa  M})_{\Gamma} & := (d_{ \Gamma} +1 )l(l+1) - \gamma_{ \Gamma} (f_r +g_s) , \notag \\
 \text{Det} ({\bfa  M})_{\Omega} & := d_{ \Omega} k_{l,m}^4 - \gamma_{ \Omega}\left( d_{ \Omega} f_u  + g_v \right)  k_{l,m}^2 + \gamma_{ \Omega}^2 (f_u g_v  -f_v g_u), \notag \\
 \text{Det} ({\bfa  M})_{\Gamma} & := d_{ \Gamma} l^2(l+1)^2 - \gamma_{ \Gamma} \left( d_{ \Gamma} f_r  + g_s \right)  l(l+1)+   \gamma_{ \Gamma}^2 (f_r g_s - f_s g_r).  \notag
\end{align}
The above holds true if and only if either
\begin{equation}\label{disp_omega}
\lambda_{l,m}^2 + \text{Tr} \left({\bfa  M}\right)_{\Omega} \lambda_{l,m} + \text{Det} \left({\bfa  M} \right)_{\Omega}   = 0,
\end{equation}
or
\begin{equation}\label{disp_gamma}
\lambda_{l,m}^2 +  \text{Tr} \left({\bfa  M} \right)_{\Gamma} \lambda_{l,m} + \text{Det} \left({\bfa  M} \right)_{\Gamma}  = 0.
\end{equation}
In the presence of diffusion, we require the emergence of  spatial growth. In order for the uniform steady state ${\bfa  w}^*$ to be unstable we require that either
\begin{enumerate}
\item $\text{Re} (\lambda_{l,m} (k_{l,m}^2))>0$ for some $k_{l,m}^2>0$,
\item[] or
 \item $\text{Re} (\lambda_{l,m} (l(l+1)))>0$ for some $l(l+1)>0$,
 \item[] or
 \item both.
\end{enumerate}
Solving \eqref{disp_omega} (and  similarly \eqref{disp_gamma}) we obtain the eigenvalues
\begin{equation}\label{lambda_omega}
2 \text{Re} (\lambda_{l,m} (k_{l,m}^2)) = - \text{Tr} \left({\bfa  M}\right)_{\Omega} \pm \sqrt{ \text{Tr}^2 \left({\bfa  M}\right)_{\Omega} - 4 \text{Det} \left({\bfa  M} \right)_{\Omega}  }.
\end{equation}
It follows then that $\text{Re} (\lambda_{l,m} (k_{l,m}^2))>0$ for some $k_{l,m}^2>0$ if and only if the following conditions hold:
\begin{equation}
\begin{cases}
\text{Tr} \left({\bfa  M}\right)_{\Omega} <0   \iff (d_{ \Omega} +1)k_{l,m}^2  -\gamma_{ \Omega} (f_u + g_v)<0,  \quad \text{and}  \\ \\
 \text{Det} \left({\bfa  M} \right)_{\Omega}  >0 
 \iff  d_{ \Omega} k_{l,m}^4 - \gamma_{ \Omega}\left( d_{ \Omega} f_u  + g_v \right)  k_{l,m}^2 + \gamma_{ \Omega}^2 (f_u g_v  -f_v g_u)>0,
\end{cases} \label{pd_cond1}
\end{equation}
or
\begin{equation}
\begin{cases}
\text{Tr} \left({\bfa  M}\right)_{\Omega} >0  \iff (d_{ \Omega} +1)k_{l,m}^2  -\gamma_{ \Omega} (f_u + g_v)>0,  \quad \text{and}  \\ \\
 \text{Det} \left({\bfa  M} \right)_{\Omega}  <0 \iff  d_{ \Omega} k_{l,m}^4 - \gamma_{ \Omega}\left( d_{ \Omega} f_u  + g_v \right)  k_{l,m}^2 + \gamma_{ \Omega}^2 (f_u g_v  -f_v g_u)<0.
\end{cases} \label{pd_cond2}
\end{equation}
Similarly, on the surface, $\text{Re} (\lambda_{l,m} (l(l+1))) >0$ for some $l(l+1)>0$ if and only the following conditions hold:
\begin{equation}
\begin{cases}
 \text{Tr} \left({\bfa  M}\right)_{\Gamma} <0  \iff (d_{ \Gamma} +1 )l(l+1) - \gamma_{ \Gamma} (f_r +g_s)  <0, \quad \text{and}  \\ \\
 \text{Det} \left({\bfa  M} \right)_{\Gamma}  >0 
 \iff d_{ \Gamma} l^2(l+1)^2 - \gamma_{ \Gamma} \left( d_{ \Gamma} f_r  + g_s \right)  l(l+1) + \gamma_{ \Gamma}^2 (f_r g_s - f_s g_r)>0, \label{pd_cond3}
\end{cases}
\end{equation}
or
\begin{equation}
\begin{cases}
 \text{Tr} \left({\bfa  M}\right)_{\Gamma} >0  \iff (d_{ \Gamma} +1 )l(l+1) - \gamma_{ \Gamma} (f_r +g_s)>0, \quad \text{and}  \\ \\
 \text{Det} \left({\bfa  M} \right)_{\Gamma}  <0 
  \iff d_{ \Gamma} l^2(l+1)^2 - \gamma_{ \Gamma} \left( d_{ \Gamma} f_r  + g_s \right)  l(l+1) + \gamma_{ \Gamma}^2 (f_r g_s - f_s g_r) <0. \label{pd_cond4}
\end{cases}
\end{equation}
We are in a position to state the following theorem:

\begin{theorem}
Assuming that
\begin{equation}
\text{Tr} \left({\bfa  J}_{\bfa  F}\right)_{\Omega}  = f_u + g_v <0 \quad \text{and} \quad  \text{Det} \left({\bfa  J}_{\bfa  F}\right)_{\Omega}  = f_u g_v - f_v g_u >0, \label{trace_omega} 
\end{equation}
then the necessary  conditions for $\text{Re} (\lambda_{l,m} (k_{l,m}^2))>0$ for some $k_{l,m}^2>0$ are given by
\begin{align}
d_{ \Omega}f_u + g_v  >0, \quad \text{and} \quad 
\left(d_{ \Omega}f_u + g_v \right)^2 - 4 d_{ \Omega} \left( f_u  g_v - f_v  g_u\right)>0. \label{turing_omega2}
\end{align}
Similarly, assuming that 
\begin{equation}
\text{Tr} \left({\bfa  J}_{\bfa  F}\right)_{\Gamma}   = f_r + g_s <0  \quad \text{and} \quad
\text{Det} \left({\bfa  J}_{\bfa  F}\right)_{\Gamma}  = f_r g_s - f_s g_r>0, \label{trace_gamma}
\end{equation}
 then the necessary conditions for $\text{Re} (\lambda_{l,m} (l(l+1))) >0$ for some $l(l+1)>0$  are given by
\begin{align}
d_{ \Gamma}f_r + g_s   >0, \quad \text{and} \quad 
\left(d_{ \Gamma}f_r + g_s \right)^2 - 4 d_{ \Gamma} \left( f_r  g_s - f_s  g_r \right)  >0. \label{turing_gamma2}
\end{align}
\end{theorem}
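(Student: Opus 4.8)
The plan is to work directly from the factorised dispersion relation obtained above: under the stated hypotheses the bulk growth rate $\lambda_{l,m}(k_{l,m}^2)$ is a root of the quadratic \eqref{disp_omega}, whose coefficients are $\text{Tr}({\bfa M})_\Omega$ and $\text{Det}({\bfa M})_\Omega$. I would begin by recording the elementary Routh--Hurwitz fact that the two roots of $\lambda^2+b\lambda+c$ both have negative real part precisely when $b>0$ and $c>0$; consequently $\text{Re}(\lambda_{l,m})>0$ for some admissible wavenumber forces, for that wavenumber, either $\text{Tr}({\bfa M})_\Omega\le 0$ or $\text{Det}({\bfa M})_\Omega<0$, i.e.\ one of the alternatives \eqref{pd_cond1}--\eqref{pd_cond2}.

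The second step is to eliminate the branch \eqref{pd_cond2}. By the first hypothesis in \eqref{trace_omega} we have $f_u+g_v<0$, and therefore
\[ \text{Tr}({\bfa M})_\Omega = (d_\Omega+1)k_{l,m}^2 - \gamma_\Omega(f_u+g_v) > 0 \quad \text{for every } k_{l,m}^2\ge 0, \]
being the sum of a nonnegative term and a strictly positive one. Hence instability can only arise through $\text{Det}({\bfa M})_\Omega<0$ for some $k_{l,m}^2>0$, i.e.\ condition \eqref{pd_cond1}, and the borderline case in which the discriminant of \eqref{disp_omega} is negative (where $\text{Re}(\lambda_{l,m})=-\tfrac12\text{Tr}({\bfa M})_\Omega$) is already covered by the root-location fact quoted above.

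The third step analyses $\text{Det}({\bfa M})_\Omega$ as an upward-opening quadratic $q(z)=d_\Omega z^2-\gamma_\Omega(d_\Omega f_u+g_v)z+\gamma_\Omega^2(f_u g_v-f_v g_u)$ in the variable $z=k_{l,m}^2\ge 0$. Its leading coefficient $d_\Omega$ is positive and, by the second hypothesis in \eqref{trace_omega}, its constant term $\gamma_\Omega^2\,\text{Det}({\bfa J}_{\bfa F})_\Omega$ is strictly positive; such a parabola with $q(0)>0$ can dip below zero for some $z>0$ only if its vertex $z^\star=\gamma_\Omega(d_\Omega f_u+g_v)/(2d_\Omega)$ satisfies $z^\star>0$, which is exactly $d_\Omega f_u+g_v>0$, and if its minimal value $q(z^\star)$ is negative, which on substituting $z^\star$ reduces after elementary algebra to $(d_\Omega f_u+g_v)^2-4d_\Omega(f_u g_v-f_v g_u)>0$. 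This gives \eqref{turing_omega2}. The surface statement \eqref{turing_gamma2} follows by the verbatim argument applied to \eqref{disp_gamma}, with $({\bfa M})_\Gamma$, the variable $l(l+1)$ and the hypotheses \eqref{trace_gamma} replacing their bulk counterparts, using again that $\text{Tr}({\bfa M})_\Gamma=(d_\Gamma+1)l(l+1)-\gamma_\Gamma(f_r+g_s)>0$ whenever $f_r+g_s<0$.

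I do not anticipate a genuine obstacle: the whole argument is a careful sign analysis of a quadratic in a single nonnegative variable. The one point meriting emphasis is that $k_{l,m}^2$ (respectively $l(l+1)$) ranges only over a discrete set, so the two inequalities are genuinely \emph{necessary} but not sufficient --- one would additionally need an admissible wavenumber to fall inside the unstable band $\{z:q(z)<0\}$ --- which is precisely why the theorem is phrased in terms of necessary conditions rather than an equivalence.
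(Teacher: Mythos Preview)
Your argument is correct and follows essentially the same route as the paper: rule out the trace branch using $f_u+g_v<0$, then analyse $\text{Det}({\bfa M})_\Omega$ as an upward-opening quadratic in $k_{l,m}^2$ --- the paper simply defers this last step to the classical two-species computation in Murray \cite{murray2003}, whereas you carry it out explicitly. One cosmetic slip: you have interchanged the labels \eqref{pd_cond1} and \eqref{pd_cond2} (in the paper \eqref{pd_cond2} is the surviving branch with $\text{Tr}({\bfa M})_\Omega>0$ and $\text{Det}({\bfa M})_\Omega<0$, while \eqref{pd_cond1} is the one your trace argument eliminates), but this does not affect the mathematics.
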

\begin{proof}
The proof is a direct  consequence of conditions \eqref{pd_cond1}- \eqref{pd_cond4}. Assuming conditions \eqref{trace_omega} and \eqref{trace_gamma} hold, then one of the conditions in \eqref{pd_cond1} and \eqref{pd_cond3} is violated, which implies that $\text{Re} (\lambda_{l,m} (k_{l,m}^2))<0$ for all $k_{l,m}^2>0$ and similarly $\text{Re} (\lambda_{l,m} (l(l+1))) <0$ for all $l(l+1)>0$. This entails that  the system can no  longer exhibit spatially inhomogeneous solutions.

The only two conditions left to hold true are \eqref{pd_cond2} and \eqref{pd_cond4}. This case corresponds to the classical standard two-component reaction-diffusion system which requires that (for details see for example \cite{murray2003})
\begin{align}
d_{ \Omega} f_u + g_v >0,  \quad \text{and} \quad 
\left(d_{ \Omega}f_u + g_v \right)^2 - 4 d_{ \Omega} \left( f_u  g_v - f_v  g_u\right)>0,
\end{align}
and similarly
\begin{align}
d_{ \Gamma}f_r + g_s  >0, \quad \text{and} \quad
\left(d_{ \Gamma}f_r + g_s \right)^2 - 4 d_{ \Gamma} \left( f_r  g_s - f_s  g_r \right)  >0.
\end{align}
This completes the proof.
\end{proof}
\begin{remark}
Assuming conditions \eqref{trace_omega} and \eqref{trace_gamma} both hold, then conditions \eqref{turing_omega2} and \eqref{turing_gamma2} imply that $d_{ \Omega} \ne 1$ and $d_{ \Gamma} \ne 1$.
\end{remark}
\begin{remark}\label{rem_both}
If condition \eqref{trace_omega} or  \eqref{trace_gamma} holds only, then either $d_{ \Omega} \ne 1$ or $d_{ \Gamma} \ne 1$ but not necessarily both.
\end{remark}

\begin{remark}
If conditions \eqref{trace_omega} and \eqref{trace_gamma} are both violated, then diffusion-driven instability can not occur.
\end{remark}

\begin{remark}
Similarly to classical reaction-diffusion systems, conditions \eqref{turing_omega2} and \eqref{turing_gamma2} imply the existence of critical diffusion coefficients in the bulk and on the surface whereby the uniform states lose stability.  In order for diffusion-driven instability to occur, the bulk and surface diffusion coefficients must be greater than the values of the critical diffusion coefficients. 
\end{remark}
Next we investigate under what assumptions on the reaction-kinetics do conditions \eqref{pd_cond1} and \eqref{pd_cond3} hold true.

\begin{itemize}
\item First let us consider the case  when
 \[ \text{Tr} \left({\bfa  J}_{\bfa  F}\right)_{\Omega}  = f_u + g_v >0 \;  \text{and} \;  \text{Det} \left({\bfa  J}_{\bfa  F}\right)_{\Omega}  = f_u g_v - f_v g_u >0, \]
and
\[ \text{Tr} \left({\bfa  J}_{\bfa  F}\right)_{\Gamma}   = f_r + g_s >0   \;  \text{and} \;
\text{Det} \left({\bfa  J}_{\bfa  F}\right)_{\Gamma}  = f_r g_s - f_s g_r>0.\]
Then
$  \text{Tr} \left({\bfa  J}_{\bfa  F}\right) =  \text{Tr} \left({\bfa  J}_{\bfa  F}\right)_{\Omega}  + \text{Tr} \left({\bfa  J}_{\bfa  F}\right)_{\Gamma}>0 $
which violates condition \eqref{ws_cond1}.

\item Similarly the case when
 \[ \text{Tr} \left({\bfa  J}_{\bfa  F}\right)_{\Omega}  = f_u + g_v >0 \;  \text{and} \;  \text{Det} \left({\bfa  J}_{\bfa  F}\right)_{\Omega}  = f_u g_v - f_v g_u <0, \]
and
\[ \text{Tr} \left({\bfa  J}_{\bfa  F}\right)_{\Gamma}   =f_r + g_s >0   \;  \text{and} \;
\text{Det} \left({\bfa  J}_{\bfa  F}\right)_{\Gamma}  =  f_r g_s - f_s g_r<0\]
violates  condition \eqref{ws_cond1}.
\item Let us consider the case when
 \[ \text{Tr} \left({\bfa  J}_{\bfa  F}\right)_{\Omega}  =f_u + g_v <0 \;  \text{and} \;  \text{Det} \left({\bfa  J}_{\bfa  F}\right)_{\Omega}  = f_u g_v - f_v g_u <0, \]
and
\[ \text{Tr} \left({\bfa  J}_{\bfa  F}\right)_{\Gamma}   = f_r + g_s<0   \;  \text{and} \;
\text{Det} \left({\bfa  J}_{\bfa  F}\right)_{\Gamma}  =  f_r g_s - f_s g_r<0.\]
Then it follows that condition \eqref{ws_cond5} given by
\begin{align}
& \Big[\text{Tr} \left({\bfa  J}_{\bfa  F}\right)_{\Gamma} \text{Tr} \left({\bfa  J}_{\bfa  F}\right)  - 2  \text{Det} \left({\bfa  J}_{\bfa  F}\right)_{\Omega}  \Big]  \text{Tr} \left({\bfa  J}_{\bfa  F}\right)_{\Omega}  + \Big[\text{Tr} \left({\bfa  J}_{\bfa  F}\right)_{\Omega} \text{Tr} \left({\bfa  J}_{\bfa  F}\right)  \Big.  \notag \\ & \Big. \hspace{18em}- 2  \text{Det} \left({\bfa  J}_{\bfa  F}\right)_{\Gamma}  \Big]  \text{Tr} \left({\bfa  J}_{\bfa  F}\right)_{\Gamma}   <0,  \notag
\end{align}
is violated.
\item Next we consider the case when
 \[ \text{Tr} \left({\bfa  J}_{\bfa  F}\right)_{\Omega}  = f_u + g_v <0 \;  \text{and} \;  \text{Det} \left({\bfa  J}_{\bfa  F}\right)_{\Omega}  = f_u g_v - f_v g_u <0, \]
and
\[ \text{Tr} \left({\bfa  J}_{\bfa  F}\right)_{\Gamma}   =f_r + g_s>0   \;  \text{and} \;
\text{Det} \left({\bfa  J}_{\bfa  F}\right)_{\Gamma}  =  f_r g_s - f_s g_r<0.\]
It follows then that none of the conditions \eqref{ws_cond1}-\eqref{ws_cond6} are violated.  However, condition \eqref{pd_cond1} does not hold. 
\item Similarly the case when
 \[ \text{Tr} \left({\bfa  J}_{\bfa  F}\right)_{\Omega}  =f_u + g_v >0 \;  \text{and} \;  \text{Det} \left({\bfa  J}_{\bfa  F}\right)_{\Omega}  = f_u g_v - f_v g_u<0, \]
and
\[ \text{Tr} \left({\bfa  J}_{\bfa  F}\right)_{\Gamma}   = f_r + g_s <0   \;  \text{and} \;
\text{Det} \left({\bfa  J}_{\bfa  F}\right)_{\Gamma}  =  f_r g_s - f_s g_r<0.\]
This implies that that none of the conditions \eqref{ws_cond1}-\eqref{ws_cond6} are violated, while condition \eqref{pd_cond3} fails not hold.  
\item Finally, the cases when
 \begin{equation}
 \begin{cases}
  \text{Tr} \left({\bfa  J}_{\bfa  F}\right)_{\Omega}  = f_u + g_v >0 \;  \text{and} \;  \text{Det} \left({\bfa  J}_{\bfa  F}\right)_{\Omega}  = f_u g_v - f_v g_u>0, \\
 \text{Tr} \left({\bfa  J}_{\bfa  F}\right)_{\Gamma}   = f_r + g_s <0   \;  \text{and} \;
\text{Det} \left({\bfa  J}_{\bfa  F}\right)_{\Gamma}  =  f_r g_s - f_s g_r>0,
\end{cases}
\end{equation}
and   
\begin{equation}
\begin{cases}
\text{Tr} \left({\bfa  J}_{\bfa  F}\right)_{\Omega}  = f_u + g_v <0 \;  \text{and} \;  \text{Det} \left({\bfa  J}_{\bfa  F}\right)_{\Omega}  =f_u g_v - f_v g_u >0, \\
\text{Tr} \left({\bfa  J}_{\bfa  F}\right)_{\Gamma}   = f_r + g_s >0   \;  \text{and} \; \text{Det} \left({\bfa  J}_{\bfa  F}\right)_{\Gamma}  =  f_r g_s - f_s g_r>0,
\end{cases}
\end{equation}  
result in {\it Remark} \ref{rem_both} above.
\end{itemize}
The above cases clearly eliminate conditions \eqref{pd_cond1} and \eqref{pd_cond3} as necessary for uniform steady state to be driven unstable in the presence of diffusion. We are now in a position to state our main result.
\begin{theorem}[Necessary conditions for diffusion-driven instability]\label{ns_cond_ad2}
The necessary conditions for diffusion-driven instability for  the coupled system of  BSRDEs
 \eqref{bs_rdes_model0} - \eqref{hv_eqn2}
 are given by
\begin{align}
& \text{Tr} \left({\bfa  J}_{\bfa  F}\right)   <0, \label{nsuf_cond1} \\
& \text{Det} \left({\bfa  J}_{\bfa  F}\right)_{\Omega} + \text{Det} \left({\bfa  J}_{\bfa  F}\right)_{\Gamma} + \text{Tr} \left({\bfa  J}_{\bfa  F}\right)_{\Omega}\text{Tr} \left({\bfa  J}_{\bfa  F}\right)_{\Gamma}  >0, \ \label{nsuf_cond2}\\
& \text{Det} \left({\bfa  J}_{\bfa  F}\right)_{\Omega}  \text{Tr} \left({\bfa  J}_{\bfa  F}\right)_{\Gamma} + \text{Det} \left({\bfa  J}_{\bfa  F}\right)_{\Gamma} \text{Tr} \left({\bfa  J}_{\bfa  F}\right)_{\Omega}  <0,  \label{nsuf_cond3} \\
& \text{Det} \left({\bfa  J}_{\bfa  F}\right)_{\Omega}  \text{Det} \left({\bfa  J}_{\bfa  F}\right)_{\Gamma}  >0,  \label{nsuf_cond4} \\
& \Big[\text{Tr} \left({\bfa  J}_{\bfa  F}\right)_{\Gamma} \text{Tr} \left({\bfa  J}_{\bfa  F}\right)  - 2  \text{Det} \left({\bfa  J}_{\bfa  F}\right)_{\Omega}  \Big]  \text{Tr} \left({\bfa  J}_{\bfa  F}\right)_{\Omega}  + \Big[\text{Tr} \left({\bfa  J}_{\bfa  F}\right)_{\Omega} \text{Tr} \left({\bfa  J}_{\bfa  F}\right)  \Big.  \notag \\ & \Big. \hspace{20em}- 2  \text{Det} \left({\bfa  J}_{\bfa  F}\right)_{\Gamma}  \Big]  \text{Tr} \left({\bfa  J}_{\bfa  F}\right)_{\Gamma}    >0,   \label{nsuf_cond5} \\
& \left[ \big(\text{Det} \left({\bfa  J}_{\bfa  F}\right)_{\Omega}  +  \text{Det} \left({\bfa  J}_{\bfa  F}\right)_{\Gamma} \big)^2 - \big( \text{Det} \left({\bfa  J}_{\bfa  F}\right)_{\Omega} \text{Tr} \left({\bfa  J}_{\bfa  F}\right)_{\Gamma}  \right. \notag \\ & \left. \hspace{8em} + \text{Det} \left({\bfa  J}_{\bfa  F}\right)_{\Gamma} \text{Tr} \left({\bfa  J}_{\bfa  F}\right)_{\Omega} \big) \text{Tr} \left({\bfa  J}_{\bfa  F} \right)  \right] \text{Tr} \left({\bfa  J}_{\bfa  F}\right)_{\Omega} \text{Tr} \left({\bfa  J}_{\bfa  F}\right)_{\Gamma}>0,  \label{nsuf_cond6}
\end{align}
and
\begin{equation}
d_{ \Omega}f_u + g_v >0,  \quad \text{and} \quad 
\left(d_{ \Omega}f_u + g_v \right)^2 - 4 d_{ \Omega} \text{Det} \left({\bfa  J}_{\bfa  F}\right)_{\Omega} >0,
\label{nsturing_omega}
\end{equation}
or/and
\begin{equation}
d_{ \Gamma}f_r + g_s  >0, \quad \text{and} \quad 
\left(d_{ \Gamma}f_r + g_s \right)^2 - 4 d_{ \Gamma}\text{Det} \left({\bfa  J}_{\bfa  F}\right)_{\Gamma}  >0.
 \label{nsturing_gamma}
\end{equation}
\end{theorem}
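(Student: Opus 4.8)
The plan is to read off ``diffusion-driven instability'' as the conjunction of two ingredients --- linear asymptotic stability of the uniform steady state ${\bfa w}^{*}$ for the kinetics alone, and linear instability of ${\bfa w}^{*}$ once the diffusion terms are restored --- and to translate each ingredient into the listed inequalities using the results already in hand. Since only \emph{necessity} is asserted, at every step only the forward implication is required. The kinetics-only ingredient is immediate: it says precisely that every zero $\lambda$ of the characteristic polynomial $p_{4}(\lambda)$ satisfies $\text{Re}(\lambda)<0$, and Theorem~\ref{ns_cond_ad} identifies this with the Routh--Hurwitz system \eqref{ws_cond1}--\eqref{ws_cond6}, which coincides with \eqref{nsuf_cond1}--\eqref{nsuf_cond6}. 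Hence those six conditions are necessary.

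Next I would treat the diffusive ingredient. On the admissible circular or spherical domains the separated ansatz \eqref{xry2}--\eqref{xry4} reduces the linearised system to the block eigenvalue problem for ${\bfa M}$, and because ${\bfa M}$ is block lower triangular its characteristic equation factors into \eqref{disp_omega} times \eqref{disp_gamma}. Therefore ${\bfa w}^{*}$ is unstable for the full system iff $\text{Re}(\lambda_{l,m})>0$ for some bulk wavenumber $k_{l,m}^{2}>0$ or some surface wavenumber $l(l+1)>0$, and by the quadratic formula \eqref{lambda_omega} together with its surface analogue this is equivalent to one of the sign scenarios \eqref{pd_cond1}--\eqref{pd_cond4} being realised at a positive wavenumber. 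The crucial step is then the bulleted discussion preceding the theorem, which I would run systematically over the signs of the four scalars $\text{Tr}({\bfa J}_{\bfa F})_{\Omega}$, $\text{Det}({\bfa J}_{\bfa F})_{\Omega}$, $\text{Tr}({\bfa J}_{\bfa F})_{\Gamma}$, $\text{Det}({\bfa J}_{\bfa F})_{\Gamma}$: for \eqref{pd_cond1} one needs $\text{Tr}({\bfa M})_{\Omega}=(d_{\Omega}+1)k_{l,m}^{2}-\gamma_{\Omega}(f_{u}+g_{v})<0$, which forces $\text{Tr}({\bfa J}_{\bfa F})_{\Omega}=f_{u}+g_{v}>0$, and every completion of this to a full sign pattern either contradicts one of \eqref{nsuf_cond1}--\eqref{nsuf_cond6} (most often \eqref{nsuf_cond1}, since $\text{Tr}({\bfa J}_{\bfa F})=\text{Tr}({\bfa J}_{\bfa F})_{\Omega}+\text{Tr}({\bfa J}_{\bfa F})_{\Gamma}$, or \eqref{nsuf_cond5}) or produces no unstable mode; the same argument rules out \eqref{pd_cond3}. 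Consequently, under \eqref{nsuf_cond1}--\eqref{nsuf_cond6}, a diffusive instability can only come from \eqref{pd_cond2} in the bulk or \eqref{pd_cond4} on the surface.

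It then remains to extract \eqref{nsturing_omega}/\eqref{nsturing_gamma} from \eqref{pd_cond2}/\eqref{pd_cond4}. Writing $\kappa=k_{l,m}^{2}$, the scalar $\text{Det}({\bfa M})_{\Omega}=d_{\Omega}\kappa^{2}-\gamma_{\Omega}(d_{\Omega}f_{u}+g_{v})\kappa+\gamma_{\Omega}^{2}\,\text{Det}({\bfa J}_{\bfa F})_{\Omega}$ is an upward-opening parabola in $\kappa$ whose value at $\kappa=0$ is $\gamma_{\Omega}^{2}\,\text{Det}({\bfa J}_{\bfa F})_{\Omega}>0$ (positivity of $\text{Det}({\bfa J}_{\bfa F})_{\Omega}$ being among the sign constraints left standing by the case check, cf.~\eqref{trace_omega}). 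Hence $\text{Det}({\bfa M})_{\Omega}$ can become negative for some $\kappa>0$ only if its vertex lies at a positive abscissa and its minimum is negative, that is, only if $d_{\Omega}f_{u}+g_{v}>0$ and $(d_{\Omega}f_{u}+g_{v})^{2}-4d_{\Omega}\,\text{Det}({\bfa J}_{\bfa F})_{\Omega}>0$ --- precisely \eqref{nsturing_omega}; the identical computation on $\text{Det}({\bfa M})_{\Gamma}$, viewed as a quadratic in $l(l+1)$, yields \eqref{nsturing_gamma}. Collecting the pieces, any diffusion-driven instability of \eqref{bs_rdes_model0}--\eqref{hv_eqn2} forces \eqref{nsuf_cond1}--\eqref{nsuf_cond6} together with \eqref{nsturing_omega} or \eqref{nsturing_gamma} (possibly both), which is the claim; structurally, the argument is a combination of Theorem~\ref{ns_cond_ad} with the standard scalar Turing analysis already recorded above.

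I expect the real obstacle to be the sign-pattern enumeration of the middle paragraph: the six inequalities \eqref{nsuf_cond1}--\eqref{nsuf_cond6} do not individually pin down the signs of $\text{Tr}({\bfa J}_{\bfa F})_{\Omega}$, $\text{Det}({\bfa J}_{\bfa F})_{\Omega}$, $\text{Tr}({\bfa J}_{\bfa F})_{\Gamma}$, $\text{Det}({\bfa J}_{\bfa F})_{\Gamma}$, so one must genuinely walk through all the sign combinations and, for each, either exhibit a violated Routh--Hurwitz condition or verify that no spatial mode with positive real part can occur --- bearing in mind that the admissible wavenumbers $k_{l,m}^{2}$ and $l(l+1)$ are discrete and bounded away from $0$. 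By contrast, identifying the ODE part with \eqref{nsuf_cond1}--\eqref{nsuf_cond6} via Theorem~\ref{ns_cond_ad} and the parabola computation for \eqref{nsturing_omega}/\eqref{nsturing_gamma} are routine.
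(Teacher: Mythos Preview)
Your proposal is correct and follows essentially the same route as the paper. In the paper, Theorem~\ref{ns_cond_ad2} is stated without a separate proof block: its justification is the accumulation of Theorem~\ref{ns_cond_ad} (the Routh--Hurwitz conditions \eqref{ws_cond1}--\eqref{ws_cond6}), the derivation of the factored dispersion relation \eqref{disp_omega}--\eqref{disp_gamma}, Theorem~2.2 (which supplies the parabola argument you spell out for \eqref{nsturing_omega}/\eqref{nsturing_gamma}), and the bulleted sign-pattern enumeration that rules out \eqref{pd_cond1} and \eqref{pd_cond3}; you have reproduced exactly this logical flow, only making the parabola step more explicit than the paper does.
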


\subsubsection{Theoretical predictions}\label{predictions}
From the analytical results we state the following theoretical predictions to be validated through the use of numerical simulations. 
\begin{enumerate}
\item The bulk and surface diffusion coefficients $d_{\Omega}$ and $d_{\Gamma}$ must be greater than one in order for diffusion-driven instability to occur. Taking $d_{\Omega}=d_{\Gamma}=1$ results in a contradiction between conditions \eqref{nsuf_cond1}, \eqref{nsturing_omega} and \eqref{nsturing_gamma}. As a result, the BSRDEs does not give rise to the formation of spatial structure. For this case, the uniform steady state is the only stable solution for the coupled system of BSRDEs  \eqref{bs_rdes_model0} - \eqref{hv_eqn2}.
\item The above imply that taking $d_{\Omega} >1$ and $d_{\Gamma}=1$, the bulk-reaction-diffusion system has the potential to induce patterning in the bulk for appropriate diffusion-driven instability parameter values while the surface-reaction-diffusion system is not able to generate patterns. Here all the conditions \eqref{nsuf_cond1}-\eqref{nsturing_omega} hold except \eqref{nsturing_gamma}.
\item Alternatively taking  $d_{\Omega} =1$ and $d_{\Gamma}>1$, the bulk-reaction-diffusion system fails to induce patterning in the bulk while the surface-reaction-diffusion system has the potential to induce patterning  on the surface. Similarly,  all the conditions \eqref{nsuf_cond1}-\eqref{nsturing_gamma} hold except \eqref{nsturing_omega}. 
\item On the other hand, taking $d_{\Omega} >1$ and $d_{\Gamma}>1$ appropriately, then the coupled system of BSRDEs exhibits patterning both in the bulk and on the surface. All the conditions  \eqref{nsuf_cond1}-\eqref{nsturing_gamma} hold both in the bulk and on the surface.
\end{enumerate}
\section{Numerical simulations of the coupled system of bulk-surface reaction-diffusion equations (BSRDEs)}\label{sec:numerics}
In this section we present bulk-surface finite element numerical solutions corresponding to the coupled system of bulk-surface reaction diffusion equations (BSRDEs) \eqref{bs_rdes_model0}-\eqref{schnakv}.  Here we omit the details of the bulk-surface finite element method as these are given elsewhere (see \cite{madzvamuse2014b} for details). Our method is inspired by the work of Elliott and Ranner \cite{elliott2012}. We use the bulk-surface finite element method  to discretise in space with piecewise bilinear elements and an implicit second order fractional-step $\theta$-scheme to discretise in time using the Newton's method for the lineraisation \cite{madzvamuse2014a,madzvamuse2014b}. For details on the convergence and stability of the fully implicit time-stepping fractional-step $\theta$-scheme, the reader is referred to Madzvamuse {\it et al.} \cite{madzvamuse2014a,madzvamuse2014b}. In all our numerical experiments, we fix the kinetic model parameter values $a=0.1$ and $b=0.9$ since these satisfy the Turing diffusion-driven instability conditions \eqref{nsuf_cond1}-\eqref{nsturing_gamma}.  For these model parameter values the equilibrium values are $(u^*,v^*,r^*,s^*)=(1,0.9,1,0.9)$. Initial conditions are prescribed as small random perturbations around the equilibrium values. For illustrative purposes let us take the parameter values describing the boundary conditions as shown in Table \ref{para_boundary}; these are selected such that they satisfy the compatibility condition \eqref{c1}.
\begin{table}[ht!]
   \centering
   \begin{tabular}{|c|c|c|c|c|c|c|c|c|c|c|c|c|}\hline
   $a$ & $b $  & $\gamma_{\Omega}$  & $\gamma_{\Gamma}$& $\alpha_1$ & $\alpha_2$ &$\beta_1$ &$\beta_2$ &$\kappa_1$ &$\kappa_2$  \\\hline
    0.1 & 0.9   & 500 & 500&  $\frac{5}{12}$ & 5 & $\frac{5}{12}$ & $0$ & $0$ & 5    \\\hline
       \end{tabular}
    \caption{Model parameter values for  the coupled system of BSRDEs \eqref{bs_rdes_model0} - \eqref{hv_eqn2}.}
         \label{para_boundary}
\end{table}

\subsubsection{A note on the bulk-surface triangulation}
We briefly outline how the bulk-surface triangulation is generated. For further specific details please see reference \cite{madzvamuse2014b}. 
Let $\Omega_h$ be a triangulation of the bulk geometry $\Omega$ with vertices ${\bfa x}_i$, $i=1,...,N_h$, where $N_h$ is the number of vertices in the triangulation. From $\Omega_h$ we then construct $\Gamma_h$ to be the triangulation of the surface geometry $\Gamma$ by defining $\Gamma_h = \Omega_h|_{\partial \Omega_h}$, i.e. the vertices of $\Gamma_h$ are the same as those lying on the surface of $\Omega_h$. In particular, then, we have $\partial\Omega_h=\Gamma_h$. An example mesh is shown in Figure \ref{mesh_example}.  The bulk triangulation induces the surface triangulation as illustrated. 

\begin{figure}
 \centering 
 \includegraphics[keepaspectratio=true,width=.9\textwidth]{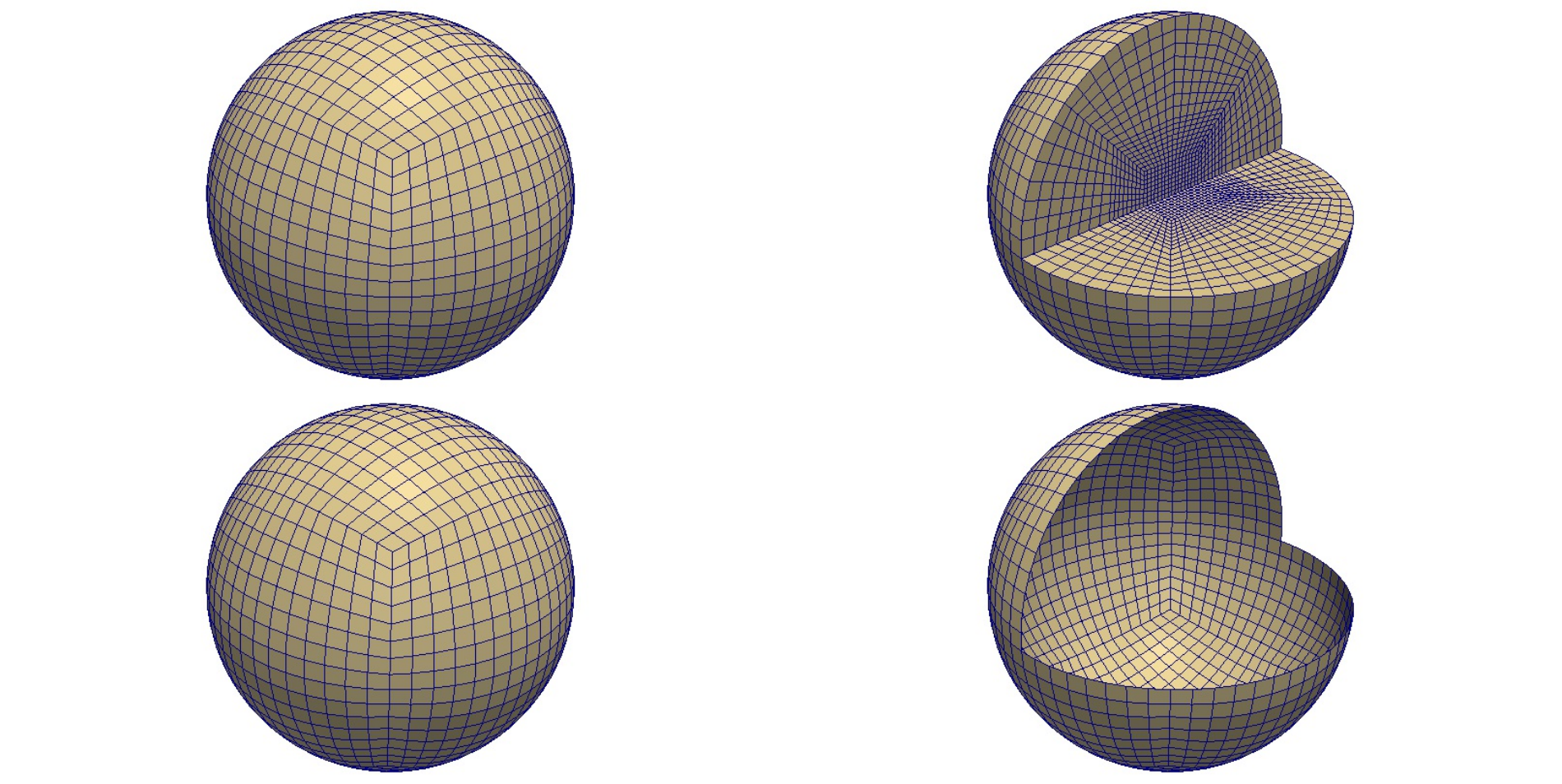}
\caption{Example meshes for the bulk (top) and surface system (bottom). Part of the domain has been cut away and shown on the right to reveal some internal mesh structure.}
\label{mesh_example}
\end{figure}


\subsection{Numerical experiments}
In this section we will only present four cases to validate our theoretical predictions outlined in Section \ref{predictions}. In most of  our simulations parameter values are fixed as shown in Table \ref{para_boundary}, except for $d_{ \Omega}$ and  $d_{ \Gamma}$ whose values are varied to demonstrate the patterning mechanism of the coupled system of BSRDEs. We only present patterns corresponding to the chemical species $u$ and $r$ in the bulk and on the surface respectively. Those corresponding to $v$ and $s$  are 180 degrees out of phase to those of $u$ and  $r$ and are therefore omitted. It must be noted however that this is not always the case in general, Robin-type boundary conditions may alter the structure of the solution profiles depending on the model parameter values and the coupling compatibility boundary parameters.

\subsubsection{\it Simulations of the coupled system of BSRDEs with $(d_{ \Omega},d_{ \Gamma})=(1,1)$}
\begin{figure}[htb]
\begin{center}
     \subfigure{
\includegraphics[keepaspectratio=true,width=.48\textwidth]{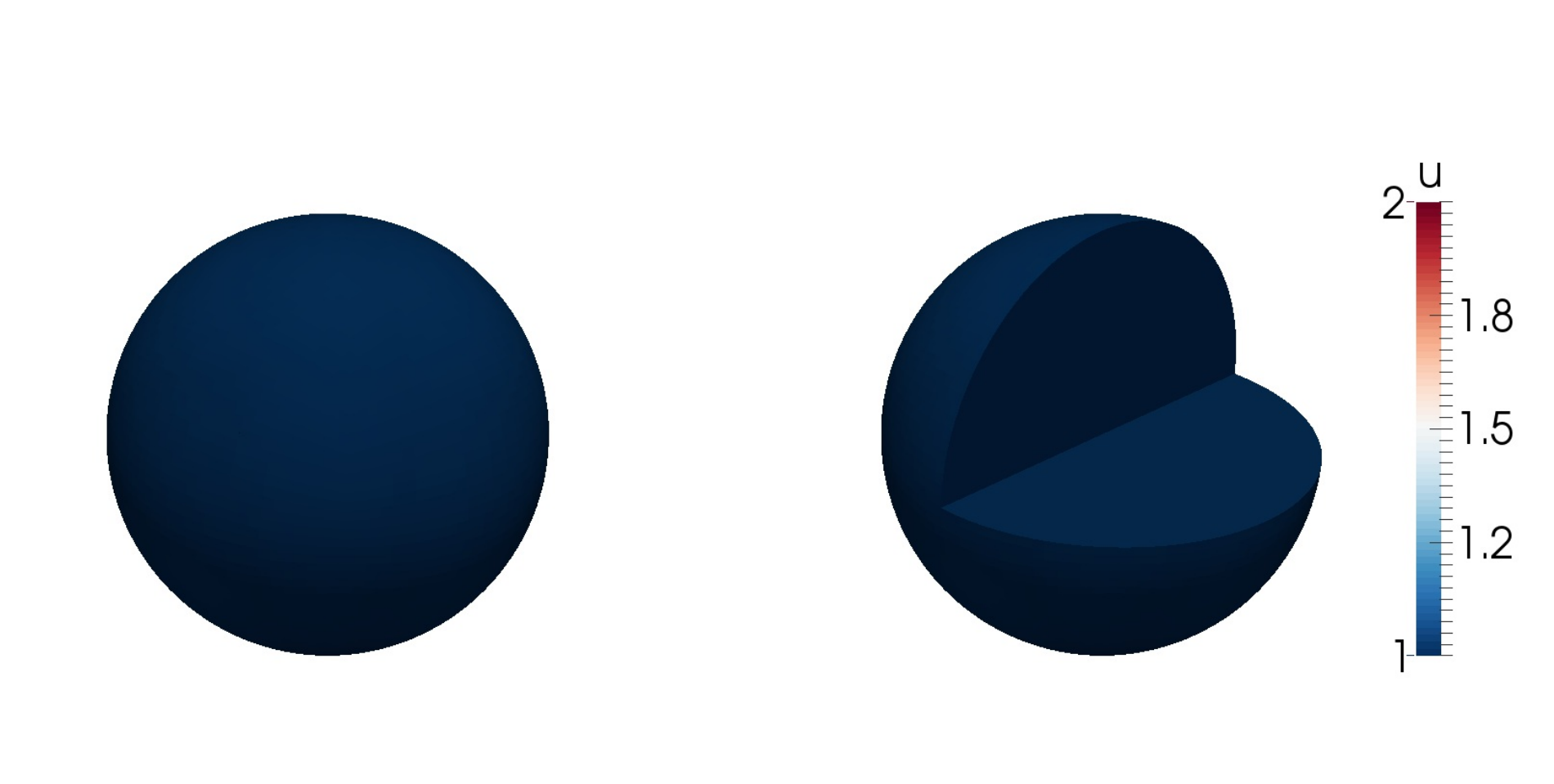}}
	\subfigure{
\includegraphics[keepaspectratio=true,width=.48\textwidth]{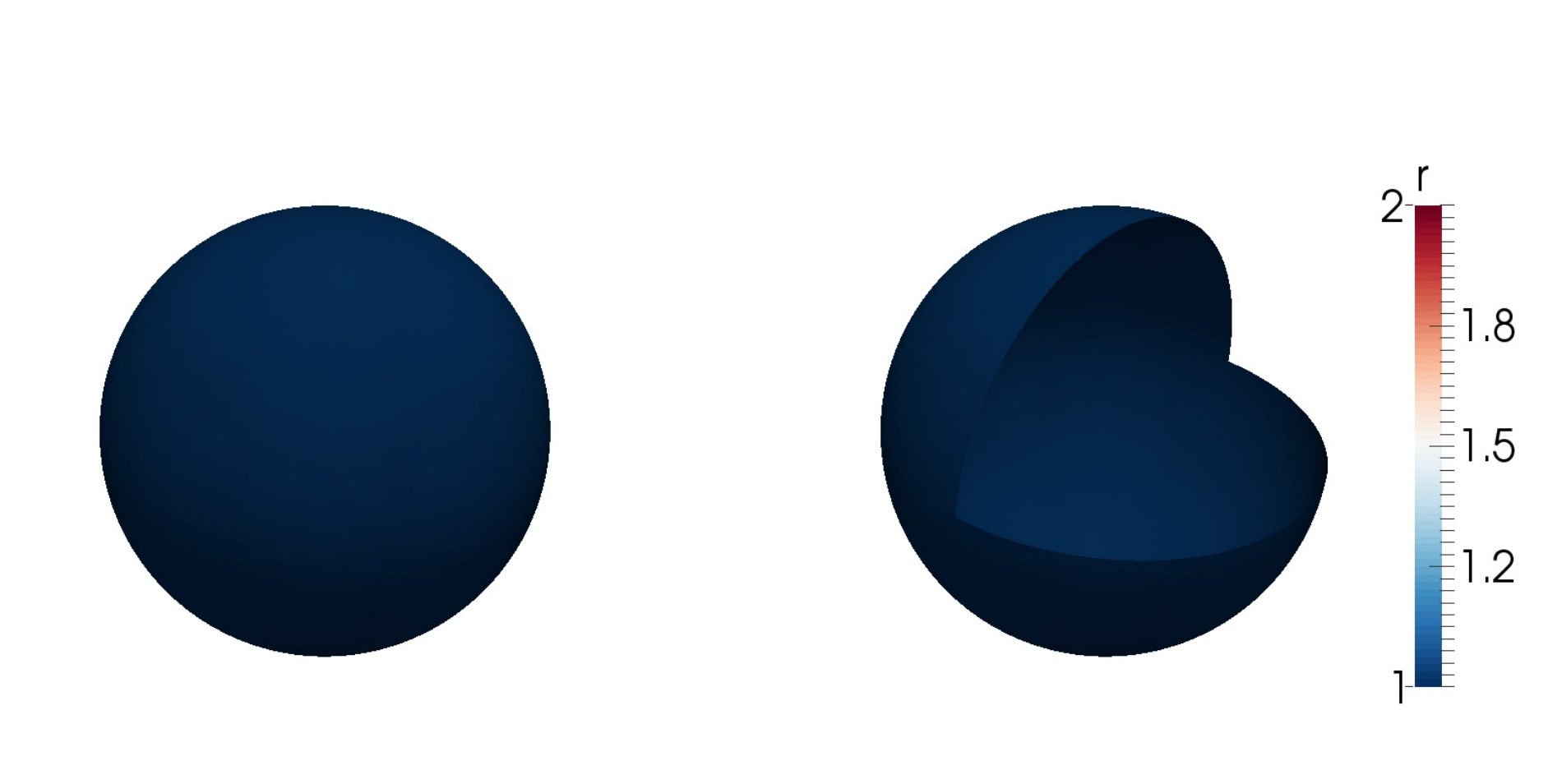}}
\end{center}
\caption{Numerical solutions corresponding to the coupled system of BSRDEs \eqref{bs_rdes_model0}-\eqref{schnakv}  with $d_{ \Omega}=1$ in the bulk and $d_{ \Gamma}=1$ on the surface. The uniform steady state solutions are converged to and no patterns form. Columns 1 and 2: solutions in the bulk representing $u$. Columns 3 and 4: solutions on the surface representing $r$. Second and fourth columns represent cross sections of the bulk and the surface respectively (Colour figure online). }
\label{fig:bd_1_sd_1_g500}
\end{figure}
The bulk-surface finite element numerical simulations of the coupled system of BSRDEs with with $d_{ \Omega}=1$ in the bulk, $d_{ \Gamma}=1$ on the surface are shown in Figure \ref{fig:bd_1_sd_1_g500}. We observe that no patterns form in complete agreement with theoretical predictions. Similarly to classical reaction-diffusion systems, diffusion coefficients must be greater than one. In particular, the diffusion coefficients must be greater than their corresponding respective critical diffusion coefficients in the bulk and on the surface. An example is shown next.  

\subsubsection{\it Simulations of the coupled system of BSRDEs with $(d_{ \Omega},d_{ \Gamma})=(1,20)$}
\begin{figure}[htb]
\begin{center}
     \subfigure{
\includegraphics[keepaspectratio=true,width=.48\textwidth]{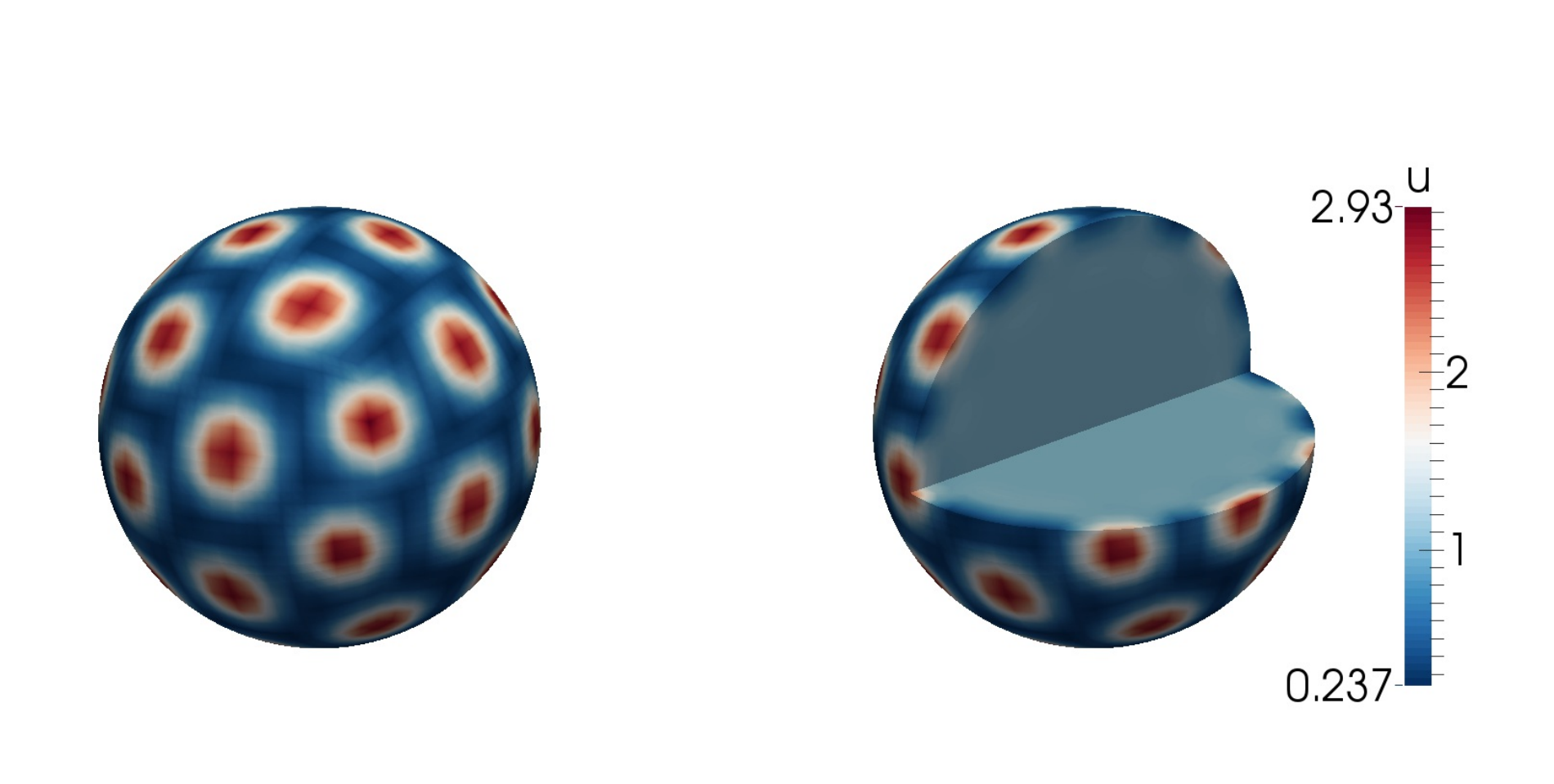}}
	\subfigure{
\includegraphics[keepaspectratio=true,width=.48\textwidth]{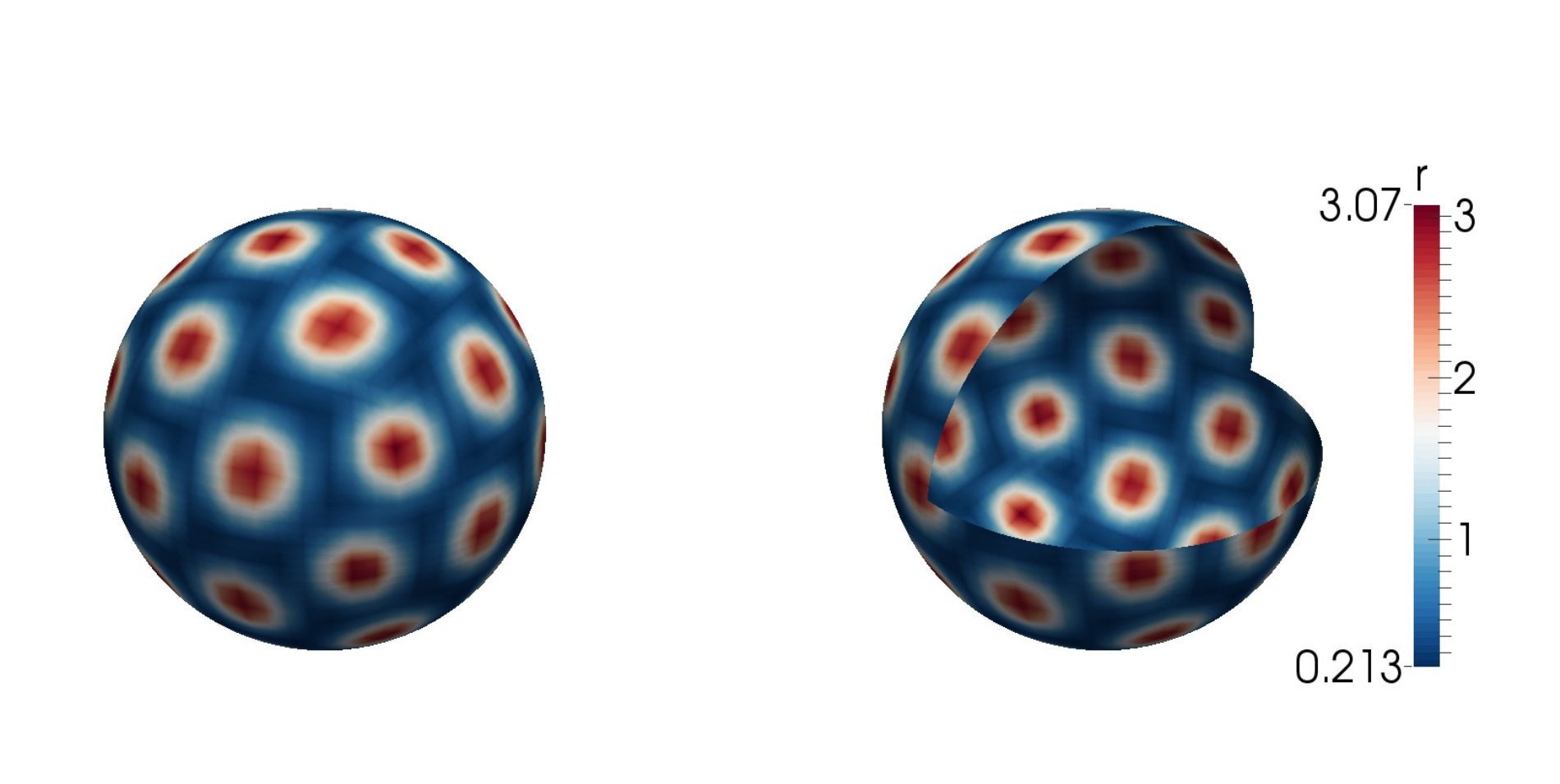}}
\end{center}
\caption{Numerical solutions corresponding to the coupled system of BSRDEs \eqref{bs_rdes_model0}-\eqref{schnakv}  with $d_{ \Omega}=1$ in the bulk and $d_{ \Gamma}=20$ on the surface. Columns 1 and 2: solutions in the bulk representing $u$. Columns 3 and 4: solutions on the surface representing $r$. Second and fourth columns represent cross sections of the bulk and the surface respectively (Colour figure online). Spot patterns form on the surface while  small balls form in the vicinity of the surface inside the bulk.}
\label{fig:bd_1_sd_20_g500}
\end{figure}
For illustrative purposes, let us take  $d_{ \Omega}=1$ in the bulk, $d_{ \Gamma}=20 > d_{ \Gamma}^{\it crit} = 8.5$ on the surface. Figure \ref{fig:bd_1_sd_20_g500} illustrate pattern formation on the surface as well as within a small region in the vicinity of the surface membrane. Spots are observed to form on the surface, while in the bulk, small balls form inside. Far away from the surface, no patterns form since the necessary conditions for diffusion-driven instability are not fulfilled in the bulk. These results confirm our theoretical predictions. We note that this particular example describes realistically pattern formation in biological systems. We expect skin patterning to manifest in the epidermis layer as well as on the surface. 

\subsubsection{\it Simulations of the coupled system of BSRDEs with $(d_{ \Omega},d_{ \Gamma})=(20,1)$}
\begin{figure}[htb]
\begin{center}
     \subfigure{
\includegraphics[keepaspectratio=true,width=.48\textwidth]{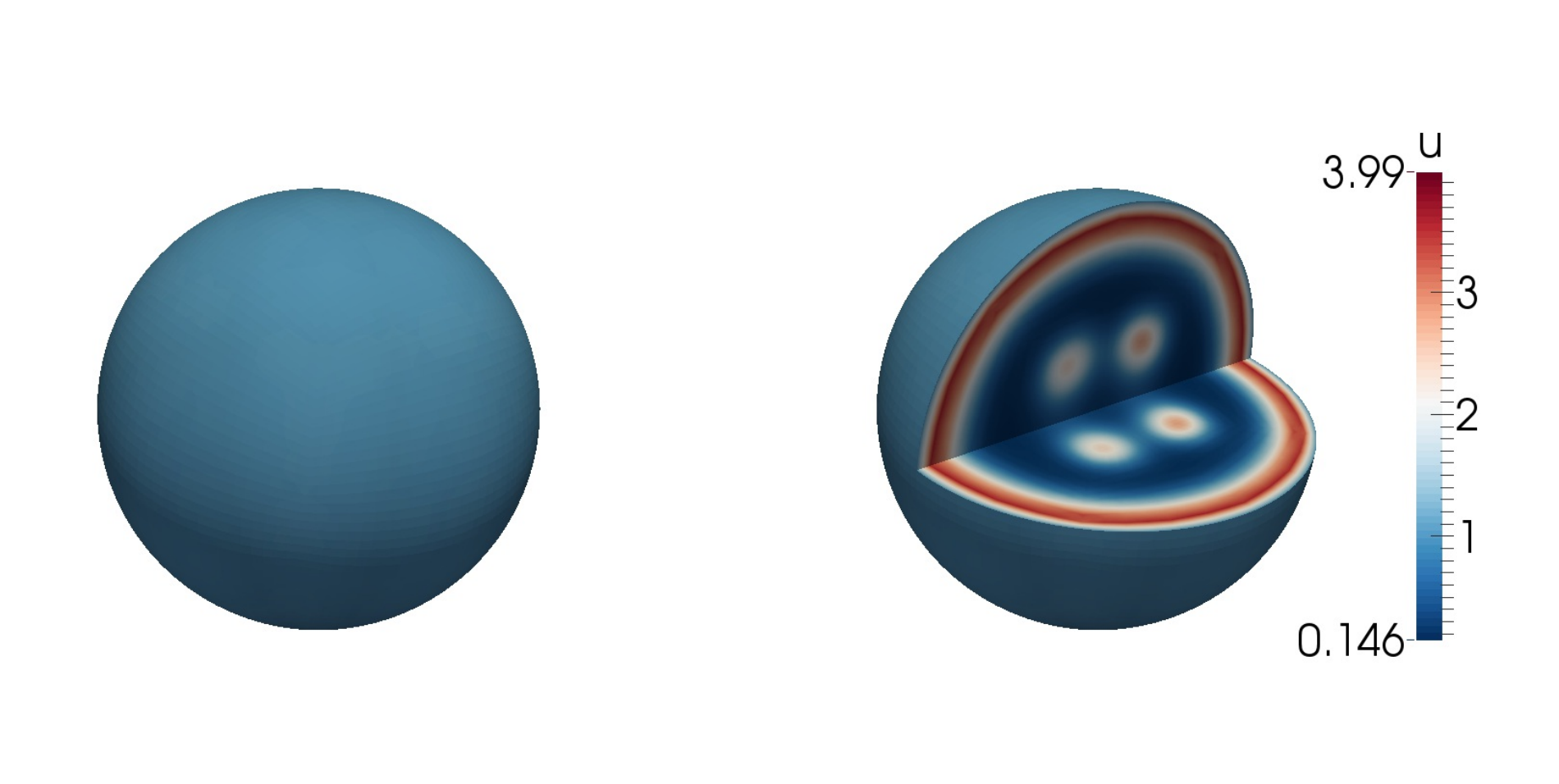}}
	\subfigure{
\includegraphics[keepaspectratio=true,width=.48\textwidth]{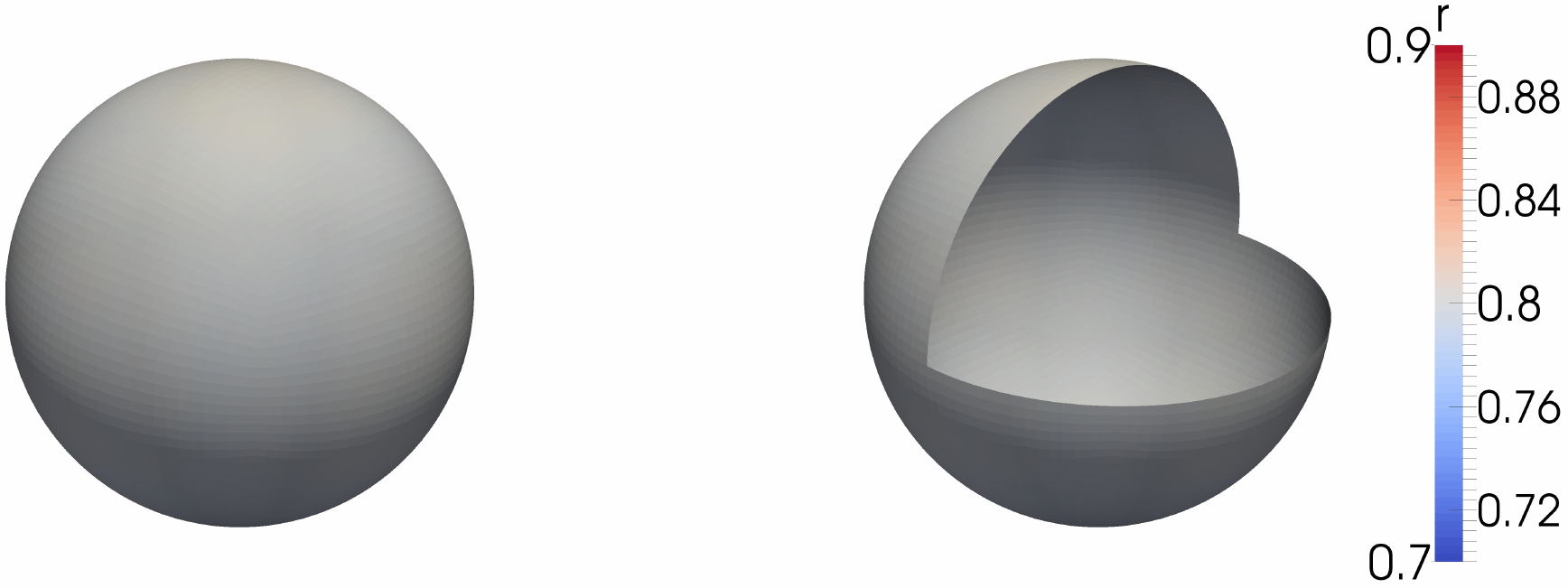}}
\end{center}
\caption{Numerical solutions corresponding to the coupled system of BSRDEs \eqref{bs_rdes_model0}-\eqref{schnakv}  with $d_{ \Omega}=20$ in the bulk and $d_{ \Gamma}=1$ on the surface. Columns 1 and 2: solutions in the bulk representing $u$. Columns 3 and 4: solutions on the surface representing $r$. Second and fourth columns represent cross sections of the bulk and the surface respectively (Colour figure online). Spectacular patterning occurs in the bulk exhibiting spots, stripes and circular patterns. The surface dynamics produce uniform patterning.}
\label{fig:bd_20_sd_1_g500}
\end{figure}
%

To generate patterns in the bulk we take $d_{ \Omega}=20> d_{ \Gamma}^{\it crit} = 8.5$ and  $d_{ \Gamma}=1$ on the surface. Figure \ref{fig:bd_20_sd_1_g500} exhibits stripe, circular and spot patterns in the bulk as illustrated by the cross-sections.  On the surface, uniform patterns occur consistent with theoretical predictions. Although the patterns for the $u$ species (columns one and two) appear uniform on the surface this is simply due to the colour scale, with the amplitude of the patterns in the bulk larger than those on the surface. This difference in the amplitude of the pattern of the bulk solution in the bulk and on the surface is due to the Robin type boundary conditions. Unlike zero-flux (also known as homogeneous Neumann) boundary conditions for standard reaction-diffusion systems which imply that no species enter or leave the domain, here, there is deposition or removal of chemical species through the flux on the surface, resulting in differences in amplitude between the bulk and surface solutions.

\subsubsection{\it Simulations of the coupled system of BSRDEs with $(d_{ \Omega},d_{ \Gamma})=(20,20)$}
\begin{figure}[htb]
\begin{center}
     \subfigure{
\includegraphics[keepaspectratio=true,width=.48\textwidth]{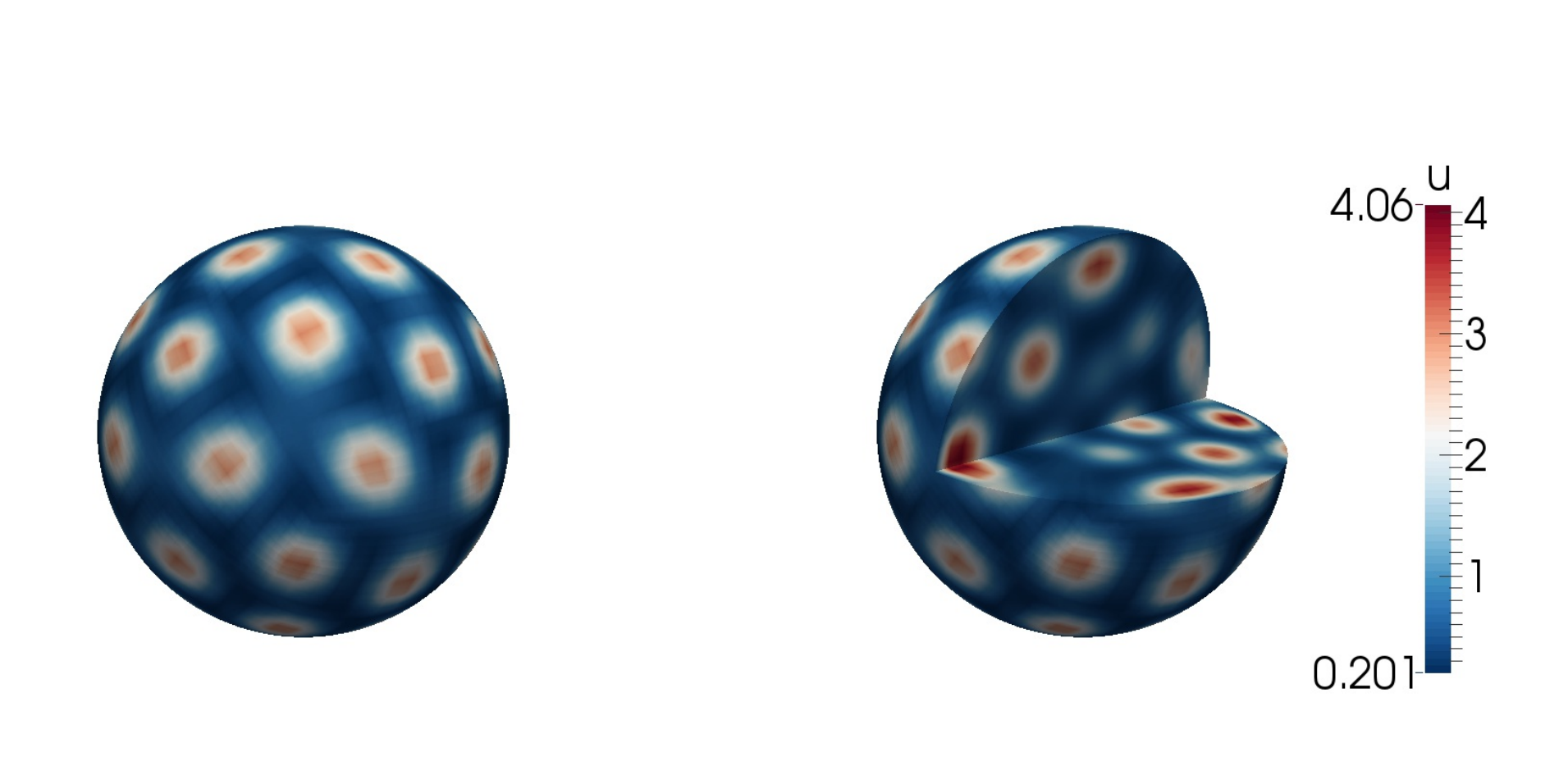}}
	\subfigure{
\includegraphics[keepaspectratio=true,width=.48\textwidth]{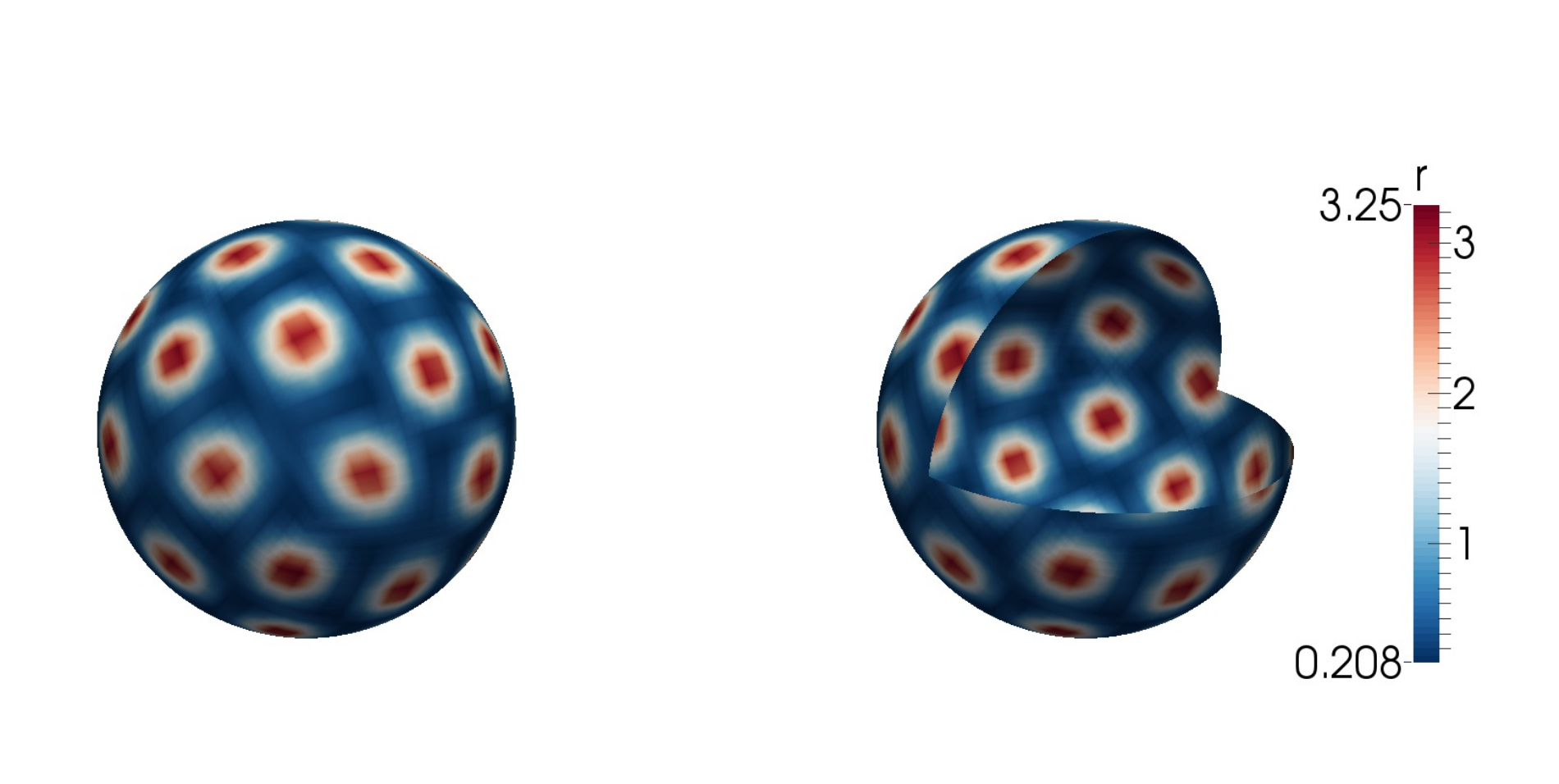}}
\end{center}
\caption{Numerical solutions corresponding to the coupled system of BSRDEs \eqref{bs_rdes_model0}-\eqref{schnakv}  with $d_{ \Omega}=20$ in the bulk and $d_{ \Gamma}=20$ on the surface. Columns 1 and 2: solutions in the bulk representing $u$. Columns 3 and 4: solutions on the surface representing $r$. Second and fourth columns represent cross sections of the bulk and the surface respectively (Colour figure online). We observe spot pattern formation both in the bulk and on the surface. }
\label{fig:bd_20_sd_20_g500}
\end{figure}
In this example, we illustrate how both bulk and surface dynamics induce patterning by taking $d_{ \Omega}=20$ in the bulk, $d_{ \Gamma}=20$ on the surface. Figure \ref{fig:bd_20_sd_20_g500} shows pattern formation in the bulk and on the surface. In the bulk we observe the formation of balls (which can be seen as spots through cross-sections) and these translate to spots on the surface. The surface dynamics themselves induce spot pattern formation. 

\section{Conclusion, discussion and future research challenges} \label{sec:conclusion}
We have presented a coupled system of bulk-surface reaction-diffusion equations whereby the bulk and surface reaction-diffusion systems are coupled through Robin-type boundary conditions. Nonlinear reaction-kinetics are considered in the bulk and on the surface and for illustrative purposes, the activator-depleted model was selected since it has a unique positive steady state. By using linear stability theory close to the bifurcation point, we state and prove a generalisation of the necessary conditions for Turing diffusion-driven instability for the coupled system of BSRDEs. Our most revealing result is that the bulk reaction-diffusion system has the capability of inducing patterning (under appropriate model and compatibility parameter values) for the surface reaction-diffusion model. On the other hand, the surface reaction-diffusion is not capable of inducing patterning everywhere in the bulk; patterns can only be induced in regions close to the surface membrane. For skin pattern formation, this example is consistent with the observation that patterns will form on the surface as well as within the epidermis layer close to the surface. We do not expect patterning to form everywhere in the body of the animals. 

Our studies reveal the following observations and research questions still to addressed:
\begin{itemize}
\item Our numerical experiments reveal that the Robin-type boundary conditions seem to introduce a boundary layer coupling the bulk and surface dynamics. However, these boundary conditions do not appear explicitly in the conditions for diffusion-driven instability and this makes it difficult to theoretically analyse their role and implications to pattern formation. Further studies are required to understand the role of these boundary conditions as well as the size of the boundary layer.
\item The compatibility condition \eqref{c1} implies that the uniform steady state in the bulk is identical to the uniform state on the surface. We are currently studying the implications of relaxing the compatibility condition.  
\item Finally, in this manuscript, we have not carried out detailed parameter search and estimation to deduce the necessary and sufficient conditions for pattern generation as well isolating excitable wavenumbers in the bulk and on the surface. Such studies might reveal more interesting properties of the coupled bulk-surface model and this forms part of our current studies. 
\end{itemize}

We have presented a framework that couples bulk dynamics ($3D$) to surface dynamics ($2D$) with the potential of numerous applications in cell motility, developmental biology, tissue engineering and regenerative medicine and biopharmaceutical where reaction-diffusion type models are routinely used \cite{chechkin2012,elliott2013,medvedev2013,nisbet2009,novak2007,ratz2012,ratz2013,venkpre}. 

We have restricted our studies to stationary volumes. In most cases, biological surfaces are known to evolve continuously with time. This introduces extra complexities to the modelling, analysis and simulation of coupled systems of bulk-surface reaction-diffusion equations. In order to consider evolving bulk-surface partial differential equations, evolution laws (geometric) should be formulated describing how the bulk and surface evolve. Here, it is important to consider specific experimental settings that allow for detailed knowledge of properties (biomechanical) and processes (biochemical) involved in the bulk-surface evolution. Such a framework will allows us to  study 3D cell migration in the area of cell motility \cite{elliott2013,george2013,madzvamuse2013,neilson2011}. In future studies, we propose to develop a 3D integrative model that couples bulk and surface dynamics during growth development or movement. 

\paragraph{ \textnormal{\textbf{Data accessibility.} This manuscript does not contain primary data and as a result has no supporting material associated with the results presented. }}

\paragraph{ \textnormal{\textbf{Competing interests.} The authors have no competing interest.}}

\paragraph{ \textnormal{\textbf{Authors' contributions.} The authors contributed equally to this work.}} 

\paragraph{ \textnormal{\textbf{Acknowledgements.}
The authors want to thank anonymous reviewers for their constructive comments.}}

\paragraph{ \textnormal{\textbf{Funding.}
This work (AM and CV) is supported by the Engineering and Physical Sciences Research Council grant: (EP/J016780/1).  AM and CV acknowledge support from the Leverhulme Trust  Research Project Grant (RPG-2014-149).  AHC was supported partly by the University of Sussex and partly by the Medical Research Council.}}

\end{document}